
   \documentclass[twoside,reqno,11pt]{fcaa-var} %

\usepackage{graphicx}
\usepackage{epsfig}

\usepackage{amsthm}
\usepackage{amsmath}
\usepackage{latexsym}
\usepackage{amsfonts}
\usepackage{amssymb}

 \textwidth  12.5cm \textheight 19cm
 \topmargin 0in
 \evensidemargin 1.3cm \oddsidemargin 1.3cm

 \hoffset 0.71cm  \voffset  1.5cm
 \baselineskip=18pt   \parindent=18pt

\newtheoremstyle{theorem}
  {15pt}          
  {15pt}  
  {\sl}  
  {\parindent}
  {\sc}  
  {. }   
  { }    
  {}     
\theoremstyle{theorem}

\newtheorem{theorem}{Theorem}[section]
\newtheorem{corollary}{Corollary}[section]
\newtheorem{proposition}{Proposition}[section]

\newtheoremstyle{defi}
  {15pt}          
  {15pt}  
  {\rm}  
  {\parindent}     
  {\sc}  
  {. }    
  { }    
  {}     
\theoremstyle{defi}
\newtheorem{definition}{Definition}[section]
\newtheorem{remark}{Remark}[section]


 %
   \usepackage{hyperref} 


  \setcounter{page}{1}
  \thispagestyle{empty}


 \title[Fractional Dirichlet distribution]{A fractional generalization of the Dirichlet distribution and related distributions}
 \author[\normalsize E. Di Nardo, F. Polito, E. Scalas]{\normalsize Elvira Di Nardo$^1$, Federico Polito$^1$, Enrico Scalas$^2$}

 
 \usepackage{color,bm}
 \allowdisplaybreaks
 
 \begin{document}

 \vbox to 1.5cm { \vfill }


 \bigskip \medskip

 \begin{abstract}

This paper is devoted to a fractional generalization of the Dirichlet distribution. The form of the multivariate distribution is derived assuming that the $n$ partitions of the interval $[0,W_n]$ are independent and identically distributed random variables following the generalized Mittag-Leffler distribution. The expected value and variance of the one-dimensional marginal are derived as well as the form of its probability density function. A related generalized Dirichlet distribution is studied that provides a reasonable approximation for some values of the parameters. The relation between this distribution and other generalizations of the Dirichlet distribution is discussed. Monte Carlo simulations of the one-dimensional marginals for both distributions are presented.

 \medskip

{\it MSC 2010\/}: 60E05;
                  33E12, 60G22.

 \smallskip

{\it Key Words and Phrases}: Fractional Dirichlet distribution, Generalized Dirichlet distribution, Three-parameter Mittag-Leffler functions, Fractional Poisson process. Wealth distribution. Power-law tails.

 \end{abstract}

 \maketitle

 \vspace*{-20pt}


	\section{Introduction}
	
	\noindent Let us consider a finite sequence of $n$ positive random variables $Z_1, \ldots, Z_n$. For instance, these variables can represent the wealth of $n$ economic agents if indebtedness is not allowed. Let us denote the sum as
	$W_n = Z_1 + \ldots +Z_n$. In the wealth interpretation this is the total wealth. If we define the wealth fraction of the $i$-th agent as $Q_i = Z_i/W_n$, we get a partition of the interval $[0,1]$ represented by the sequence $\mathbf{Q} = (Q_1, \ldots, Q_n)$ such that $Q_1 + \ldots + Q_n = 1$ almost surely. We are particularly interested in multivariate distributions for the sequence $\mathbf{Q}$ whose one-dimensional marginals have heavy tails.  If we further assume that the random variables $Z_1, \ldots, Z_n$ are independent and identically distributed, there is a nice and immediate relationship with point processes of renewal type. In this case, the variables $Z_i$ can be interpreted as inter-event intervals and the partial sums $W_k = \sum_{i=1}^k Z_i$, with $k \leq n$, are the epochs of the events.
	
	In order to clarify the relationship, we start by recalling some basic facts on the time-fractional Poisson process as we are going to use and generalize it in the next section. From \cite{MR2535014,
		MR2120631} we know that the time-fractional Poisson process $N^\nu = (N^\nu(t))_{t \ge 0}$, $\nu \in (0,1]$, can be defined as a renewal
		process with independent and identically distributed inter-event waiting times $\mathcal{T}_j$, $j \in \mathbb{N}^* = \{1,2,\dots\}$, with probability density function
		\begin{align}
			\label{part}
			\mathbb{P}(\mathcal{T}_j \in \mathrm dt) = \lambda t^{\nu-1} E_{\nu,\nu} (-\lambda t^\nu) \, \mathrm dt, \qquad
			\lambda > 0, \: t > 0,
		\end{align}
		where
		\begin{align}
			E_{\alpha,\beta}(z) = \sum_{r=0}^\infty \frac{z^r}{\Gamma(\alpha r + \beta)}, \qquad z,\alpha,\beta \in \mathbb{C}, \: \Re(\alpha)>0,
		\end{align}
		is the two-parameter Mittag--Leffler function. Note that for $\nu=1$, the waiting times $\mathcal{T}_j$ are exponentially
		distributed and $N^1$ is the homogeneous Poisson process. Moreover, the Laplace transform
		of the probability density \eqref{part} takes a very compact form. Indeed, we have
		\begin{align}
			\int_0^\infty e^{-z t} \mathbb{P}(\mathcal{T}_j \in \mathrm dt) = \frac{\lambda}{\lambda+z^\nu}, \qquad z > 0.
		\end{align}
		Let us now indicate with $T_k$, $k \in \mathbb{N}^*$, the random occurrence time of the $k$-th event of the stream of events defining $N^\nu$. From the renewal structure
		of $N^\nu$ we readily obtain that the Laplace transform of $T_k$ reads
		\begin{align}
			\int_0^\infty e^{-z t} \mathbb{P}(T_k \in \mathrm dt) = \left( \frac{\lambda}{\lambda+z^\nu} \right)^k, \qquad z > 0,
		\end{align}
		which in turn corresponds to the Laplace transform of a function involving the three-parameter Mittag--Leffler function (also known as the Prabhakar function -- see \cite{giusti}).
		In particular, the three-parameter Mittag--Leffler function is defined as
		\begin{align}
			E_{\alpha,\beta}^\delta(z) = \sum_{r=0}^\infty \frac{z^r}{\Gamma(\alpha r + \beta)}
			\frac{\Gamma(\delta+r)}{r! \Gamma(\delta)}, \qquad z,\alpha,\beta,\delta \in \mathbb{C}, \: \Re(\alpha)>0,
		\end{align}
		and we know by direct calculation that (see e.g.\ \cite{mathai}, formula (2.3.24))
		\begin{align}
			\label{20de}
			\int_0^\infty t^{\beta-1} e^{-zt} E_{\alpha, \beta}^\delta (\zeta t^\alpha)\, \mathrm dt =
			z^{-\beta} \left( 1-\zeta z^{-\beta} \right)^{-\delta},
		\end{align}
		where $\Re(\alpha)>0$, $\Re(\beta)>0$, $\Re(z)>0$, $z > |\zeta|^{1/\Re(\alpha)}$. Using \eqref{20de},
		we obtain
		\begin{align}
			\mathbb{P}(T_k \in \mathrm dt) = \lambda^k t^{\nu k-1} E_{\nu,\nu k}^k (-\lambda t^\nu) \, \mathrm dt, \qquad
			\lambda>0, \: t > 0, \: \nu \in (0,1], \: k \in \mathbb{N}^*.
		\end{align}
		\begin{remark}\label{remErlang}
    		Note that, for $\nu=1$, the above density reduces to that of an $\text{Erlang}(\lambda,k)$ distributed random variable. This can be
    		seen by simply noticing that
    		\begin{align}
    			\mathbb{P} (T_k \in \mathrm dt) = \mathrm dt \, \lambda^k t^{k-1} \sum_{r=0}^\infty \frac{(-\lambda t)^r \Gamma(r+k)}{\Gamma(r+k)\Gamma(k)r!}
    			= \mathrm dt \frac{\lambda^k t^{k-1}e^{-\lambda t}}{(k-1)!}, \qquad k \in \mathbb{N}^*.
    		\end{align}
		\end{remark}
		\begin{remark}\label{remDirichlet}
		The $\text{Erlang}(\lambda,k)$ distribution is a special case of the $\text{Gamma}(a,c)$ distribution. Consider a sequence $Z_1, \ldots, Z_n$ of independent random variables each following a $\text{Gamma}$ distribution of parameter $(a_1,c), \dots, (a_n,c)$. It is well known that their sum $W_n$ is still a $\text{Gamma}$ of parameter $(a_1 + \ldots + a_n,c)$. Then the sequence of fractions $Q_1, \ldots, Q_n$ has a joint $(N-1)$-dimensional Dirichlet distribution of parameters $a_1, \ldots, a_n$ with density
		\begin{equation}
		    \label{classicDirichlet}
		    f_{\bf Q} (q_1, \ldots, q_{n-1}) = \frac{\Gamma(a_1 + \ldots + a_n)}{\Gamma(a_1)\cdot \ldots \cdot \Gamma(a_n)} q_1^{a_1-1}\cdots \left(1-\sum_{i=1}^{n-1} q_i \right)^{a_n-1}
		\end{equation}
		with $q_1 + \dots + q_n = 1$ and is independent of $W_n$.
		\end{remark}
        The proof of the results in Remark \ref{remDirichlet} can be found in several textbooks and lecture notes (see e.g. \cite{Bertoin}, Lemma 1.5).
        \begin{remark}
            The random variables $\mathcal{T}_j$ have the following asymptotic behaviour for $t \to \infty$ \cite{gorenflo}:
            \begin{equation}
                \mathbb{P}(\mathcal{T}_1 > t) \sim \frac{\sin(\nu \pi)}{\pi} \frac{\Gamma(\nu)}{t^\nu}, \,\, t \gg 1;
            \end{equation}
            therefore, their sums $T_k$ belong to the basin of attraction of the $\nu$-stable subordinator.
        \end{remark}	
        \begin{remark}
            The distributions considered in the present paper belong to the class of distributions on the simplex discussed in \cite{fav} (see \eqref{lap} below and \cite{bondesson}).
        \end{remark}

        This paper contains the following material. Section 2 concerns the definition and properties of the fractional Dirichlet distribution. Section 3 mirrors section 2 and is devoted to the generalized Dirichlet distribution. Section 4 explains how to simulate the fractional Dirichlet distribution and  presents the results of Monte Carlo simulations in order to illustrate the relation between the fractional Dirichlet distribution and the generalized Dirichlet distribution.

	\section{Construction of the fractional Dirichlet distribution}\label{sec}
	
        \noindent Based on Remark \ref{remErlang} and Remark \ref{remDirichlet}, we now define a generalization of the Gamma distribution and we immediately present a fractional generalization of the Dirichlet distribution.
		\begin{definition}[Fractional Gamma distribution]
		    Let $X$ be a positive real valued random variable with
		    distribution
		    \begin{align}\label{miche}
		        \mu(\mathrm dx)
		        = \mathbb{P}(X \in \mathrm dx) = \lambda^\beta x^{\nu \beta-1} E_{\nu,\nu\beta}^\beta (-\lambda x^\nu) \, \mathrm dx,
		    \end{align}
		    where $\lambda > 0$, $x>0$, $\beta >0$, $\nu \in (0,1]$. Then $X$ is said to be distributed as a fractional Gamma of parameters $\lambda,\beta,\nu$ (we write $X \sim FG(\lambda, \beta, \nu)$) (see \cite{pillai}; for applications to renewal processes see \cite{cahoypolito, mich1, mich2, mich3}).
		\end{definition}
		\begin{remark}
    		The Laplace transform of $\mu$ reads
		    \begin{align}
		    	\label{lap}
		    	\int_0^\infty e^{-zx} \mu(\mathrm dx) = \left( \frac{\lambda}{\lambda+z^\nu} \right)^\beta, \qquad z > 0. 
		    \end{align}
		\end{remark}
		By means of \eqref{miche}, we will construct a generalization of the Dirichlet distribution.
		We consider $n$ independent random variables $Z_i$, $i=1,\dots,n$, distributed
		as fractional Gamma random variables of parameters $(1,\beta_i,\nu)$, $\nu \in (0,1]$, $\beta_i>0$, $i=1,\dots, n$, respectively.
        Furthermore, define the sum
		$W = Z_1+\dots +Z_n$, set $Q_i=Z_i/W$, $i=1,\dots,n$, and consider
		the transformation
		\begin{align}
			(Z_1,\dots,Z_n) \longrightarrow \biggl(W Q_1, \dots, W Q_{n-1}, W\Bigl(1-\sum_{i=1}^{n-1} Q_i\Bigr)\biggr).
		\end{align}
		Note that, from \eqref{lap}, the distribution of $W$
		is fractional Gamma as well, i.e.\ $W \sim FG(1,\bar{\beta},\nu)$, where $\bar{\beta} = \sum_{i=1}^n \beta_i$.
		The joint pdf of the vector $(W,\bm{Q})=(W,Q_1, \dots, Q_{n-1})$ reads
		\begin{align}
			f_{(W,\bm{Q})} & (y,q_1,\dots,q_{n-1}) \\
			= {} & \left[ \prod_{i=1}^{n-1} (y q_i)^{\nu \beta_i-1} E_{\nu,\nu \beta_i}^{\beta_i} (-(yq_i)^\nu) \right] 
		    \left[ y(1-\sum_{i=1}^{n-1}q_i) \right]^{\nu \beta_n-1} \notag\\
			& \times E_{\nu,\nu\beta_n}^{\beta_n} \biggl( -\Bigl(y-y\sum_{i=1}^{n-1}q_i\Bigr)^\nu \biggr) y^{n-1} \notag \\
			= {} & \left( \prod_{i=1}^{n-1} y^{\nu \beta_i -1} \right) y^{\nu \beta_n -1} y^{n-1}
			\left[ \prod_{i=1}^{n-1} q_i^{\nu \beta_i-1} E_{\nu,\nu \beta_i}^{\beta_i}(-(yq_i)^\nu) \right] \notag\\
			& \times\left( 1-\sum_{i=1}^{n-1} q_i \right)^{\nu \beta_n-1} \hspace{-.5cm} E_{\nu,\nu \beta_n}^{\beta_n}
			\biggl( -\Bigl(y-y\sum_{i=1}^{n-1} q_i\Bigr)^\nu \biggr) \notag \\
			= {} & y^{\nu \bar{\beta} -1} \left[ \prod_{i=1}^{n-1} q_i^{\nu \beta_i-1} E_{\nu,\nu \beta_i}^{\beta_i}
			(-(yq_i)^\nu) \right] \left( 1-\sum_{i=1}^{n-1} q_i \right)^{\nu \beta_n-1} \notag \\
			& \times E_{\nu,\nu \beta_n}^{\beta_n} \biggl( -\Bigl( y-y\sum_{i=1}^{n-1}q_i \Bigr)^\nu \biggr). \notag
		\end{align}
		The joint probability density function of $\bm{Q} = (Q_1, \dots, Q_{n-1})$ is then obtained by marginalization. Hence,
		\begin{align}\label{integralformula}
			f_{\bm{Q}}&(q_1,\dots,q_{n-1}) = \left( \prod_{i=1}^{n-1} q_i^{\nu \beta_i-1} \right)
			\left( 1-\sum_{i=1}^{n-1} q_i\right)^{\nu \beta_n-1} \\
			& \times \int_0^\infty y^{\nu \bar{\beta} -1} \prod_{i=1}^{n-1} E_{\nu,\nu \beta_i}^{\beta_i}
			(-(yq_i)^\nu) E_{\nu,\nu \beta_n}^{\beta_n} \biggl( -\Bigl( y-y\sum_{i=1}^{n-1}q_i \Bigr)^\nu \biggr) \mathrm dy. \notag
		\end{align}

        \begin{remark}		
        	On the $n$-dimensional simplex $\Delta_n$ the probability density of the random vector $(Q_1,\dots,Q_n)$, where $\sum_n Q_n = 1$ a.s., writes		
    		\begin{align}\label{cucu}
    			& \mathbb{P}\left( (Q_1, \dots, Q_n) \in \mathrm d (q_1, \dots, q_n) \right) \\
	    		& = \prod_{i=1}^n q_i^{\nu \beta_i-1} \int_0^\infty y^{\nu \bar{\beta} -1} \prod_{i=1}^n E_{\nu,\nu \beta_i}^{\beta_i}
	    		\left( -(yq_i)^\nu \right) \, \mathrm dy. \notag
	    	\end{align}
	
    		Notice that for $\nu=1$ the integral in the rhs of  \eqref{cucu} can be easily solved and the Dirichlet$(\bm{\beta} = (\beta_1,\dots,\beta_n))$ is obtained. In this case
	    	$(Q_1,\dots,Q_n)$ is uniformly distributed on $\Delta_n$ for $\beta_i=1$, $i \in \mathbb{N}^*$.
	    	
	    	If $\nu \in (0,1)$ with  $\beta_i=1$, we have
	    	\begin{align}
	    		\mathbb{P}\left( (Q_1, \dots, Q_n) \in \mathrm d (q_1, \dots, q_n) \right)
	    		= \prod_{i=1}^n q_i^{\nu-1} \int_0^\infty y^{\nu n -1}  \prod_{i=1}^n E_{\nu,\nu} (-(yq_i)^\nu) \, \mathrm dy,
		    \end{align}
		    which is symmetric but not uniform.
		    
    		If we let instead $\beta_i=1/\nu$ (again symmetric) we obtain
		    \begin{align}
		    	\mathbb{P}\left( (Q_1, \dots, Q_n) \in \mathrm d (q_1, \dots, q_n) \right)
		    	= \int_0^\infty y^{n-1} \prod_{i=1}^n E_{\nu,1}^{1/\nu} (-(yq_i)^\nu) \, \mathrm dy.
		    \end{align}
		\end{remark}
		
        \subsection{Properties}
        
            The derivation of the marginal moments can be done explicitly using the formulas in Section 2.2 of \cite{fav}.
            
            \begin{proposition}
                Let $\bm{Q} = (Q_1,\dots,Q_{n-1})$ be a random vector distributed with pdf \eqref{integralformula}. For each $j = 1, \dots, n-1$, we have,
                \begin{align}
                    & \mathbb{E} Q_j = \frac{\beta_j}{\bar{\beta}}, \\
                    & \mathbb{V}\text{ar} \, Q_j = \frac{\beta_j(\bar{\beta}-\beta_j)}{\bar{\beta}^2(\bar{\beta}+1)}\left( 1+ \bar{\beta}(1-\nu) \right). \label{va}
                \end{align}
            \end{proposition}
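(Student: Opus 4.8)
The plan is to compute the marginal moments $\mathbb{E}[Q_j^r] = \mathbb{E}[Z_j^r W^{-r}]$ directly for $r=1,2$, exploiting the independence of the $Z_i$ together with the explicit Laplace transform \eqref{lap}. The starting point is the elementary identity $W^{-r} = \tfrac{1}{\Gamma(r)}\int_0^\infty u^{r-1} e^{-uW}\,\mathrm du$, valid for $W>0$ and $r>0$, which converts the negative power of the sum into an exponential that factors across the independent coordinates. Since the integrand $Z_j^r u^{r-1} e^{-uW}$ is nonnegative, Tonelli's theorem justifies interchanging expectation and integration, so that $\mathbb{E}[Q_j^r] = \tfrac{1}{\Gamma(r)}\int_0^\infty u^{r-1}\,\mathbb{E}[Z_j^r e^{-uW}]\,\mathrm du$.

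Next I would factor $\mathbb{E}[Z_j^r e^{-uW}] = \mathbb{E}[Z_j^r e^{-uZ_j}]\prod_{i\neq j}\mathbb{E}[e^{-uZ_i}]$ by independence. By \eqref{lap} with $\lambda=1$, each factor with $i\neq j$ equals $(1+u^\nu)^{-\beta_i}$, while $\mathbb{E}[Z_j^r e^{-uZ_j}] = (-1)^r \tfrac{\mathrm d^r}{\mathrm du^r}(1+u^\nu)^{-\beta_j}$. The crucial simplification is that the product $\prod_{i\neq j}(1+u^\nu)^{-\beta_i} = (1+u^\nu)^{-(\bar\beta-\beta_j)}$ merges with the $(1+u^\nu)$-powers produced by differentiation, so every surviving factor carries the total exponent $\bar\beta$; this is precisely where $\bar\beta$ enters the final answer. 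For $r=1$ one differentiation gives $\mathbb{E}[Z_j e^{-uZ_j}] = \beta_j \nu u^{\nu-1}(1+u^\nu)^{-\beta_j-1}$, hence $\mathbb{E}[Z_j e^{-uW}] = \beta_j \nu u^{\nu-1}(1+u^\nu)^{-\bar\beta-1}$; the substitution $v=u^\nu$ reduces the integral to $\beta_j\int_0^\infty (1+v)^{-\bar\beta-1}\,\mathrm dv = \beta_j/\bar\beta$, giving $\mathbb{E}Q_j = \beta_j/\bar\beta$.

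For $r=2$ I would differentiate twice, which yields two terms, one proportional to $(1+u^\nu)^{-\bar\beta-1}$ and one to $(1+u^\nu)^{-\bar\beta-2}$. After the same substitution $v=u^\nu$ these reduce to the Beta integrals $\int_0^\infty (1+v)^{-\bar\beta-1}\,\mathrm dv = 1/\bar\beta$ and $\int_0^\infty v(1+v)^{-\bar\beta-2}\,\mathrm dv = B(2,\bar\beta) = 1/[\bar\beta(\bar\beta+1)]$. Collecting the two contributions produces $\mathbb{E}[Q_j^2] = \tfrac{\beta_j(1-\nu)}{\bar\beta} + \tfrac{\beta_j(\beta_j+1)\nu}{\bar\beta(\bar\beta+1)}$. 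Finally, subtracting $(\beta_j/\bar\beta)^2$ and factoring the numerator over the common denominator $\bar\beta^2(\bar\beta+1)$ recovers exactly \eqref{va}.

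The only genuine obstacle is the bookkeeping in the $r=2$ step: one must carry out the second derivative of $(1+u^\nu)^{-\beta_j}$ correctly, since it is the cross term with the factor $(\nu-1)$ that is responsible for the $(1-\nu)$ contribution, and then factor the resulting polynomial in $\bar\beta$ and $\beta_j$ to recognize the product $\beta_j(\bar\beta-\beta_j)\,[1+\bar\beta(1-\nu)]$. The interchange of expectation and integration and the evaluation of the Beta integrals are routine.
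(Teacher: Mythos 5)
Your proof is correct, and its computational core coincides with the paper's: the same derivatives of $(1+u^\nu)^{-\beta_j}$, the same substitution $v=u^\nu$, the same Beta integrals $\int_0^\infty (1+v)^{-\bar\beta-1}\,\mathrm dv = 1/\bar\beta$ and $\int_0^\infty v(1+v)^{-\bar\beta-2}\,\mathrm dv = 1/[\bar\beta(\bar\beta+1)]$, and the same final algebra leading to \eqref{va}. The only difference is structural: where you derive the moment identity
\begin{align*}
\mathbb{E}[Q_j^r] = \frac{(-1)^r}{\Gamma(r)}\int_0^\infty u^{r-1}\left(\frac{\mathrm d^r}{\mathrm du^r}(1+u^\nu)^{-\beta_j}\right)(1+u^\nu)^{-(\bar\beta-\beta_j)}\,\mathrm du
\end{align*}
from first principles --- via $W^{-r}=\frac{1}{\Gamma(r)}\int_0^\infty u^{r-1}e^{-uW}\,\mathrm du$, Tonelli, and independence of the $Z_i$ --- the paper simply invokes it as Proposition 2 of \cite{fav}: the two displayed formulas opening the paper's proof are exactly your $r=1$ and $r=2$ identities. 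Your derivation is essentially how that cited proposition is proved, so what you gain is a fully self-contained argument; what the citation buys the paper is brevity and a link to the general theory of simplex distributions built from Laplace transforms of the form \eqref{lap}. One step you pass over silently --- the identity $\mathbb{E}[Z_j^r e^{-uZ_j}]=(-1)^r\frac{\mathrm d^r}{\mathrm du^r}(1+u^\nu)^{-\beta_j}$ for $u>0$, i.e.\ differentiation under the expectation --- is standard (Laplace transforms of nonnegative random variables are smooth on $(0,\infty)$), but it deserves a sentence: it matters here precisely because $Z_j$ itself has \emph{no} finite moments when $\nu<1$, and only the exponential damping $e^{-uZ_j}$ makes these expectations finite, which is the reason the whole computation must be routed through the Laplace transform rather than through $\mathbb{E}Z_j$ and $\mathbb{E}Z_j^2$ directly.
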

            \begin{proof}
                By Proposition 2 of \cite{fav} we have
                \begin{align}
                    \mathbb{E} Q_j &= - \int_0^\infty \left( \frac{\mathrm{d}}{\mathrm{d}z} \left( \frac{1}{1+z^\nu} \right)^{\beta_j} \right)\left( \frac{1}{1+z^\nu} \right)^{\bar{\beta}-\beta_j}\mathrm{d}z \\
                    & = \int_0^\infty \frac{\beta_j \nu z^{\nu-1}}{1+z^\nu}
                    (1+z^\nu)^{-\bar{\beta}}\mathrm{d}z \notag \\
                    & = \beta_j \int_0^\infty \frac{\mathrm{d}w}{(1+w)^{\bar{\beta+1}}} = \frac{\beta_j}{\bar{\beta}}. \notag
                \end{align}
                Similarly, the second moment writes
                \begin{align}
                     \mathbb{E} Q_j^2 &= \int_0^\infty z\left( \frac{\mathrm{d}^2}{\mathrm{d}z^2} \left( \frac{1}{1+z^\nu} \right)^{\beta_j} \right)\left( \frac{1}{1+z^\nu} \right)^{\bar{\beta}-\beta_j}\mathrm{d}z \\
                     & = \int_0^\infty \left[ \frac{\nu^2 z^{2\nu-2}\beta_j(\beta_j+1)}{(1+z^\nu)^{\beta_j+2}} -\frac{\beta_j \nu (\nu-1) z^{\nu -2}}{(1+z^\nu)^{\beta_j+1}} \right] (1+z^\nu)^{-\bar{\beta}+\beta_j}\mathrm{d}z \notag \\
                     & \!\!\!\!\overset{z^\nu=w}{=} \nu \beta_j (\beta_ +1) \int_0^\infty \frac{w}{(1+w)^{\bar{\beta}+2}}\mathrm{d}w + (1-\nu) \beta_j \int_0^\infty \frac{\mathrm{d}w}{(1+w)^{\bar{\beta}+1}} \notag \\
                     & = \nu\frac{\beta_j(\beta_j + 1)}{\bar{\beta}(\bar{\beta}+1)} + (1-\nu) \frac{\beta_j}{\bar{\beta}}, \notag
                \end{align}
                and hence after some computation
                \begin{align}
                    \mathbb{V}\text{ar} \, Q_j = \frac{\beta_j(\bar{\beta}-\beta_j)}{\bar{\beta}^2(\bar{\beta}+1)}\left( 1+ \bar{\beta}(1-\nu) \right).
                \end{align}
            \end{proof}
            
            \begin{remark}
                Notice that the first factor of the variance \eqref{va} is in fact the variance of a one-dimensional marginal of a Dirichlet$(\bm{\beta})$ distribution. It follows that the
                marginals are overdispersed with respect to those of
                a Dirichlet$(\bm{\beta})$ distribution.
            \end{remark}

            We now proceed by analyzing  the aggregation property and therefore the marginal distributions.

            \begin{proposition}[Aggregation property]
                Consider the distribution defined in equation \eqref{integralformula} and the random variable $\mathcal{Q} = \sum_{j=1}^k Q_{i_j}$ where $1 < k < n$ and $i_j$ denotes any permutation of the indices. Then the random variable $\mathcal{Z} = W\mathcal{Q}$
                follows the distribution \eqref{miche} with $\beta = \sum_{j=1}^k \beta_{i_j}$.
            \end{proposition}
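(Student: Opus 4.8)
The plan is to exploit the elementary but crucial observation that rescaling the aggregated fraction by the total weight simply recovers a partial sum of the original independent fractional Gamma variables. First I would use the definition $Q_i = Z_i / W$ to telescope
\[
\mathcal{Z} = W\mathcal{Q} = W \sum_{j=1}^k Q_{i_j} = \sum_{j=1}^k W Q_{i_j} = \sum_{j=1}^k Z_{i_j}.
\]
This identity is the heart of the argument: the seemingly complicated object $\mathcal{Z}$ is, deterministically (with no recourse to any independence between $W$ and $\mathcal{Q}$), nothing but the sum of the selected summands $Z_{i_1}, \dots, Z_{i_k}$.

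Next I would invoke the mutual independence of the $Z_i$ together with the Laplace transform formula \eqref{lap}, according to which each $Z_{i_j} \sim FG(1,\beta_{i_j},\nu)$ has Laplace transform $\left( 1+z^\nu \right)^{-\beta_{i_j}}$. Since the Laplace transform of a sum of independent random variables factorizes, we obtain
\[
\int_0^\infty e^{-z x}\, \mathbb{P}(\mathcal{Z} \in \mathrm dx)
= \prod_{j=1}^k \left( \frac{1}{1+z^\nu} \right)^{\beta_{i_j}}
= \left( \frac{1}{1+z^\nu} \right)^{\sum_{j=1}^k \beta_{i_j}}, \qquad z > 0.
\]
This is precisely the Laplace transform \eqref{lap} of an $FG(1,\beta,\nu)$ law with $\beta = \sum_{j=1}^k \beta_{i_j}$.

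Finally I would appeal to the uniqueness of the Laplace transform to conclude that $\mathcal{Z}$ is distributed according to \eqref{miche} with $\lambda=1$ and that value of $\beta$. The only point worth a remark is the invariance under the choice of the index set $\{i_1,\dots,i_k\}$: because the product of Laplace transforms depends on the selected shape parameters only through their sum, \emph{any} subset of size $k$ produces the same law, which accounts for the ``any permutation of the indices'' clause in the statement. I do not expect any genuine analytic obstacle here; the entire content of the proof lies in recognizing the identity $W\mathcal{Q} = \sum_j Z_{i_j}$, after which the result is immediate from the closure of the fractional Gamma family under independent summation, itself an immediate consequence of \eqref{lap}.
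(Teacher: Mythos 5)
Your proof is correct and follows essentially the same route as the paper's: both hinge on the identity $W\mathcal{Q} = \sum_{j=1}^k Z_{i_j}$ and then identify the law of this sum of independent fractional Gamma variables via the factorization of the Laplace transform \eqref{lap}. Your write-up is simply a more explicit version of the paper's one-line argument (and, incidentally, more careful, since the $Z_{i_j}$ are independent but not identically distributed when the $\beta_{i_j}$ differ, a point the paper's wording glosses over).
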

            \begin{proof}
                The proof is immediate considering that $\mathcal{Q}$ comes from the sum of i.i.d.\ positive random variables each one with Laplace transform given by \eqref{lap} and then divided by $W$. Therefore 
                $\mathcal{Z} = W \mathcal{Q}$ has distribution given by \eqref{miche} with $\beta = \sum_{j=1}^k \beta_{i_j}$.
            \end{proof}
            An immediate corollary of this result is
            \begin{corollary}[Marginal distribution]
                Consider the distribution defined in equation \eqref{integralformula}. Then its marginal on $Q_i$ is given by
                \begin{align}
                    f_{Q_i} (q_i) = {} & q_i^{\nu \beta_i -1}(1 -q_i)^{\nu (\bar{\beta} - \beta_i) - 1} \label{marginal} \\
                    & \times \int_{0}^\infty y^{\nu \bar{\beta} -1} E^{\beta_i}_{\nu, \nu \beta_i} (-(yq_i)^\nu) E^{\bar{\beta}-\beta_i}_{\nu,\nu(\bar{\beta}-\beta_i)} ((-y(1-q_i)^\nu)) \, \mathrm{d}y. \notag
                \end{align}
            \end{corollary}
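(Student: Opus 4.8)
The plan is to exploit the aggregation property just established, which reduces the computation of the marginal of $Q_i$ to the two-variable instance of the construction \eqref{integralformula}. The key observation is that we may group the $n$ independent fractional Gamma variables into two blocks: the single variable $Z_i$ with parameter $\beta_i$, and the complementary sum $S_i := \sum_{j \neq i} Z_j$. By the multiplicativity of the Laplace transform \eqref{lap}, one has $\prod_{j \neq i}(1+z^\nu)^{-\beta_j} = (1+z^\nu)^{-(\bar{\beta} - \beta_i)}$, so that $S_i \sim FG(1, \bar{\beta} - \beta_i, \nu)$; moreover $S_i$ is independent of $Z_i$ because the original $Z_1, \dots, Z_n$ are jointly independent. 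Thus $W = Z_i + S_i$ and $Q_i = Z_i/(Z_i + S_i)$ are built from exactly two independent fractional Gamma variables, which is the content encoded by the aggregation property.

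First I would write the joint density of $(Z_i, S_i)$ as the product of the two densities \eqref{miche}, and then perform the change of variables $(Z_i, S_i) \mapsto (W, Q_i)$ given by $Z_i = W Q_i$ and $S_i = W(1 - Q_i)$. This is precisely the $n = 2$ specialization of the transformation used to derive \eqref{integralformula}, and a direct computation shows that its Jacobian has absolute value $|{-W}| = y$. This yields the joint density of $(W, Q_i)$, from which $f_{Q_i}$ follows by integrating out $W$.

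Next I would collect the powers of $y = W$. The factor $(y q_i)^{\nu\beta_i - 1}$ coming from the density of $Z_i$, the factor $(y(1-q_i))^{\nu(\bar{\beta} - \beta_i) - 1}$ coming from the density of $S_i$, and the Jacobian factor $y$ combine to produce $q_i^{\nu\beta_i - 1}(1-q_i)^{\nu(\bar{\beta}-\beta_i)-1}\, y^{\nu\bar{\beta} - 1}$, since the $y$-exponents satisfy $(\nu\beta_i - 1) + (\nu(\bar{\beta} - \beta_i) - 1) + 1 = \nu\bar{\beta} - 1$. Pulling the $q_i$-dependent prefactors outside the integral and integrating in $y$ over $(0, \infty)$ then reproduces \eqref{marginal}, with the two Mittag--Leffler factors $E^{\beta_i}_{\nu,\nu\beta_i}(-(yq_i)^\nu)$ and $E^{\bar{\beta} - \beta_i}_{\nu, \nu(\bar{\beta} - \beta_i)}(-(y(1-q_i))^\nu)$ remaining inside the integrand.

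The computation is essentially routine, being a mirror of the $n$-variable derivation already carried out before \eqref{integralformula}; the only point requiring genuine care is the opening reduction, namely that the aggregated remainder $S_i$ is again fractional Gamma of parameter $\bar{\beta} - \beta_i$ and is independent of $Z_i$. Since this is exactly what the aggregation property provides, no real obstacle remains beyond the bookkeeping of exponents and the Jacobian.
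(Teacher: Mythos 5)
Your proposal is correct and follows essentially the same route as the paper: the corollary is stated there as an immediate consequence of the aggregation property, i.e.\ lumping $S_i = \sum_{j\neq i} Z_j \sim FG(1,\bar{\beta}-\beta_i,\nu)$ (independent of $Z_i$) and applying the $n=2$ case of the change of variables that produced \eqref{integralformula}. Your write-up merely makes explicit the Jacobian and exponent bookkeeping that the paper leaves implicit, and it correctly identifies the intended argument (incidentally, the doubled parentheses in \eqref{marginal} are a typo for $-\bigl(y(1-q_i)\bigr)^\nu$, consistent with your computation).
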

            As the three-parameter Mittag-Leffler function has a representation as an $H$-function  \cite{mathai},
            \begin{align}
                H_{p,r}^{m,n}(z)  = H_{p,r}^{m,n}  \!\left[ z \left| \begin{array}{l}
                ( a_i, A_i )_{1,p} \\
                ( b_i, B_i)_{1,r}  \end{array} \right. \right], \label{HF}
            \end{align}
            for  suitable choices of $(a_i, A_i)$ and $(b_i, B_i)$, the marginal distribution \eqref{marginal} can be expressed in terms of an $H$-function too.
            \begin{proposition}
                If $Q_i$ is the random variable with pdf \eqref{marginal}, then
                \begin{align}
                    & f_{Q_i} (q_i) = \frac{1}{\nu \Gamma(\beta_i)\Gamma(
                    \bar{\beta}-\beta_i)} q_i^{\nu \beta_i -1}(1 -q_i)^{-(\nu\beta_i + 1)}  \label{marginal1} \\
                    & \times \lim_{\varepsilon \downarrow 0}  H_{3,3}^{2,2} \!\left[ \left(\frac{q_i}{1-q_i}\right)^{\nu} \left| \begin{matrix} 
                    ( 1-\beta_i , 1 ) & (1-\bar{\beta}+\varepsilon,1) & (\nu(\varepsilon-\beta_i), \nu) \\
                    ( 0 ,1 ) & ( \varepsilon-\beta_i , 1 ) & (1 - \nu \beta_i , \nu)  \end{matrix} \right. \right]. \notag
                \end{align}
            \end{proposition}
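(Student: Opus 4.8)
The plan is to collapse the $y$-integral in \eqref{marginal} into a single Mellin--Barnes integral whose integrand can then be read off as the $\Theta$-kernel of an $H$-function. The only analytic input needed beyond the $H$-function encoding \eqref{HF} of the Prabhakar function is its Mellin transform,
\[
  \int_0^\infty x^{s-1} E^{\delta}_{\alpha,\beta}(-x)\,\mathrm{d}x = \frac{\Gamma(s)\,\Gamma(\delta-s)}{\Gamma(\delta)\,\Gamma(\beta-\alpha s)},\qquad 0<\Re(s)<\Re(\delta),
\]
equivalently the Mellin--Barnes formula $E^{\delta}_{\alpha,\beta}(-x)=\frac{1}{\Gamma(\delta)}\frac{1}{2\pi i}\int_L \frac{\Gamma(s)\Gamma(\delta-s)}{\Gamma(\beta-\alpha s)}x^{-s}\,\mathrm{d}s$, which is precisely the $H^{1,1}_{1,2}$ representation alluded to in the line preceding \eqref{HF}.

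First I would replace the factor $E^{\beta_i}_{\nu,\nu\beta_i}(-(yq_i)^\nu)$ in \eqref{marginal} by its Mellin--Barnes integral in a variable $s$, so that it contributes $\Gamma(s)\Gamma(\beta_i-s)\big/\big[\Gamma(\beta_i)\Gamma(\nu\beta_i-\nu s)\big]\,(yq_i)^{-\nu s}$. After interchanging the $s$- and $y$-integrations (legitimate by absolute convergence on a suitable vertical line) the remaining $y$-integral is, up to powers of $1-q_i$ produced by the substitution $u=(y(1-q_i))^\nu$, exactly the Mellin transform of the second Prabhakar factor evaluated at $\bar{\beta}-s$; by the formula above it equals $\Gamma(\bar{\beta}-s)\Gamma(s-\beta_i)\big/\big[\Gamma(\bar{\beta}-\beta_i)\Gamma(\nu s-\nu\beta_i)\big]$, the argument $s-\beta_i$ arising from $(\bar{\beta}-\beta_i)-\bar{\beta}=-\beta_i$. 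Collecting the $1-q_i$ powers, the prefactor $(1-q_i)^{\nu(\bar{\beta}-\beta_i)-1}$ combines with the $(1-q_i)^{-\nu\bar{\beta}}$ from the substitution to give precisely $(1-q_i)^{-(\nu\beta_i+1)}$, while the constant $[\nu\Gamma(\beta_i)\Gamma(\bar{\beta}-\beta_i)]^{-1}$ appears exactly as in \eqref{marginal1}.

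What is left is the single contour integral
\[
  \frac{1}{2\pi i}\int_L \frac{\Gamma(s)\,\Gamma(s-\beta_i)\,\Gamma(\beta_i-s)\,\Gamma(\bar{\beta}-s)}{\Gamma(\nu s-\nu\beta_i)\,\Gamma(\nu\beta_i-\nu s)}\left(\frac{1-q_i}{q_i}\right)^{\nu s}\mathrm{d}s,
\]
and I would identify its integrand with the kernel $\Theta(\sigma)$ of an $H^{2,2}_{3,3}$ evaluated at $w=(q_i/(1-q_i))^\nu$, since that kernel carries the factor $w^{-\sigma}=((1-q_i)/q_i)^{\nu\sigma}$. Matching the numerator gammas $\Gamma(s),\Gamma(s-\beta_i)$ (the $b_j$-type) and $\Gamma(\beta_i-s),\Gamma(\bar{\beta}-s)$ (the $a_i$-type) together with the denominator gammas $\Gamma(\nu s-\nu\beta_i),\Gamma(\nu\beta_i-\nu s)$ fixes $m=n=2$, $p=q=3$ and, at $\varepsilon=0$, the six parameter pairs displayed in \eqref{marginal1}: top $(1-\beta_i,1),(1-\bar{\beta},1),(-\nu\beta_i,\nu)$ and bottom $(0,1),(-\beta_i,1),(1-\nu\beta_i,\nu)$.

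The delicate point, and the reason the statement carries the outer $\lim_{\varepsilon\downarrow 0}$, is that this integrand contains the \emph{conjugate} pairs $\Gamma(\beta_i-s)\Gamma(s-\beta_i)$ in the numerator and $\Gamma(\nu\beta_i-\nu s)\Gamma(\nu s-\nu\beta_i)$ in the denominator. For such a configuration the leftmost pole of $\Gamma(\beta_i-s)$ (an $a_i$-pole, at $s=\beta_i$) coincides with the rightmost pole of $\Gamma(s-\beta_i)$ (a $b_j$-pole, also at $s=\beta_i$), so the separation-of-poles condition defining the $H^{2,2}_{3,3}$ and its contour $L$ fails and the symbol at $\varepsilon=0$ is degenerate. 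Introducing $\varepsilon>0$ through the shifts $(1-\bar{\beta}+\varepsilon,1)$ and $(\varepsilon-\beta_i,1)$ replaces $\Gamma(s-\beta_i)$ by $\Gamma(s+\varepsilon-\beta_i)$ and $\Gamma(\bar{\beta}-s)$ by $\Gamma(\bar{\beta}-\varepsilon-s)$, pushing the colliding pole sequences a distance $\varepsilon$ apart and making a genuine, convergent $H^{2,2}_{3,3}$ available, after which the identity follows on letting $\varepsilon\downarrow 0$. I expect the bulk of the work to be this last step: one must show the $\varepsilon$-deformed contour integral converges and that the limit passes under the integral sign, which I would control by choosing $L$ in the gap $(\beta_i-\varepsilon,\beta_i)$ and using Stirling asymptotics of the $\Gamma$-ratio to dominate the kernel uniformly in $\varepsilon$.
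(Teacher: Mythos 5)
Your proposal is correct and follows essentially the paper's own route: the paper likewise evaluates the marginal integral as an $\varepsilon$-shifted $H^{2,2}_{3,3}$ --- it lowers the exponent to $\eta=\bar{\beta}-\varepsilon$ in \eqref{MLH2}, which produces exactly your shifted pairs $(1-\bar{\beta}+\varepsilon,1)$ and $(\varepsilon-\beta_i,1)$ and the same separation of the colliding pole sequences at $s=\beta_i$ --- and then obtains \eqref{marginal1} by letting $\varepsilon\downarrow 0$. The only difference is one of packaging: where you re-derive the product-integral formula by hand (Mellin--Barnes representation of one Prabhakar factor, Fubini, Mellin transform of the other), the paper invokes Theorem 2.9 of Kilbas--Saigo, whose proof is precisely that Parseval-type computation, together with Theorem 1.1 there to certify that the shifted symbol defines a genuine $H$-function.
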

            \begin{proof}
                In the integral \eqref{marginal}
                set 
                \begin{align}
                    \delta_1=\beta_i, \,\, \delta_2=\bar{\beta}-\beta_i, \,\, \bar{q}_1=q_i^{\nu}, \,\,\, \hbox{and} \,\,\, \bar{q}_2 = (1-q_i)^{\nu}.
                    \label{par}
                \end{align} 
                Denote with $I^{\nu \bar{\beta}}_{\delta_1, \delta_2}$ the resulting integral and observe that
                \begin{align}
                    I^{\nu \bar{\beta}}_{\delta_1, \delta_2}  = \frac{1}{\nu}  
                    \int_{0}^\infty y^{ \delta_1+\delta_2 -1} E^{\delta_1}_{\nu, \nu \delta_1} (- y \bar{q}_1) E^{\delta_2}_{\nu,\nu \delta_2} ( -y \bar{q}_2) \, \mathrm{d}y. \label{marginal2}
                \end{align}
                For $\delta > 0$ and $\nu \in (0,1]$ we have 
    		    \begin{align}
        			E_{\nu,\nu \delta}^\delta(z) =\frac{1}{\Gamma(\delta)} H_{1,2}^{1,1} \!\left[ -z \left| \begin{array}{l}
                    ( 1-\delta , 1 ) \\
                    ( 0 ,1 ) \,\,\, ( 1-\nu \delta , \nu )  \end{array} \right. \right]. \label{MLH}
    		    \end{align}
                For $\eta \in (0,\bar{\beta}),$ 
                by using \eqref{MLH} and Theorem 2.9 in \cite{Kilbas}, we have 
    	 	    \begin{align}
    	            I^{\eta}_{\delta_1,\delta_2}  & = \int_{0}^\infty y^{\eta -1} E^{\delta_1}_{\nu, \nu \delta_1} (- y\bar{q}_1) E^{\delta_2}_{\nu,\nu \delta_2} (- y\bar{q}_2) \, \mathrm{d}y \label{MLH2} \\ & =\frac{\bar{q}_2^{-\eta}}{\Gamma(\delta_1)\Gamma(\delta_2)} H_{3,3}^{2,2} \!\left[ \frac{\bar{q}_1}{\bar{q}_2} \left| \begin{matrix} 
                    ( 1-\delta_1 , 1 ) & (1-\eta,1) & (\nu(\delta_2 - \eta), \nu) \\
                    ( 0 ,1 ) & ( \delta_2 - \eta , 1 ) & (1 - \nu \delta_1 , \nu)  \end{matrix} \right. \right]. \notag
    		    \end{align}
                Set $\eta = \bar{\beta}-\varepsilon$ in \eqref{MLH2} with $\varepsilon \in (0, \bar{\beta}),$  and use  
                \eqref{par} to recover $
                I^{\bar{\beta}-\varepsilon}_{\beta_i,\bar{\beta}-\beta_i}.$  If $\varepsilon$ is sufficiently small, the poles $-l, \beta_i - \varepsilon - l, (\nu \beta_i - 1 -l)/\nu,l=0,1,2,\ldots$, do not coincide with the poles 
                $\beta_i + k, \bar{\beta} - \varepsilon + k, (\nu(\varepsilon - \beta_i) +k)/\nu, k=0,1,2,\ldots \, .$ Then, according to Theorem 1.1 in \cite{Kilbas}, 
                the $H$-function in \eqref{marginal1}
                makes sense for all
                $q_i \in (0,1)$ as
                $A_1 + A_2 - A_3 + B_1 + B_2 - B_ 3 = 4 - 2\nu >0.$ The claim follows 
                by taking the limit as $\varepsilon \downarrow 0$ of $
                I^{\bar{\beta}-\varepsilon}_{\beta_i,\bar{\beta}-\beta_i}.$ 
    		\end{proof}
            \begin{remark}
                By using Properties 2.1, 2.3 and 2.5 of \cite{Kilbas}, the $H$-function in \eqref{marginal1} can be rewritten interchanging  $\beta_i$ with $\bar{\beta}-\beta_i$ and 
                $q_i$ with $1-q_i$, which corresponds to commuting 
                the two Mittag-Leffler functions in \eqref{MLH2}.
            \end{remark}
            According to Theorem 1.3 and 1.4 
            \cite{Kilbas}, since 
            \begin{align}\label{conditions}
                &\sum_{i=1}^3 (B_i - A_i) = 0, \qquad
                \prod_{i=1}^3   \frac{B_i^{B_i}}{A_i^{A_i}}  =  1,  \\
                & \sum_{j=1}^3 (b_i - a_i) =  \bar{\beta} (1 - \nu) + \nu (\bar{\beta} - \varepsilon) > 0, \notag
            \end{align}
            the $H$-function in \eqref{marginal1} has a power series expansion. The following propositions rely on this property.
            \begin{proposition}
                For $q_i < 1/2$ and $\beta_i$ not a positive integer
                \begin{align}
                    & f_{Q_i} (q_i) = \frac{q_i^{\nu \beta_i -1}(1 -q_i)^{-(\nu\beta_i + 1)}}{\nu \Gamma(\beta_i)\Gamma(
                    \bar{\beta}-\beta_i)}  \notag \\ 
                    & \times \left[
                    \Gamma( \bar{\beta})  \frac{ \Gamma(-\beta_i) \Gamma(\beta_i)}{\Gamma(-\nu\beta_i) \Gamma(\nu\beta_i)}
                    + \sum_{k = 1}^{\infty} (-1)^k D_k \left(\frac{q_i}{1-q_i} \right)^{\nu k} \right], \label{expansion} 
                \end{align}
                where
                \begin{align}
                    D_k & =  
                    \frac{\Gamma( \bar{\beta}+k)\Gamma(-\beta_i - k) \, \Gamma(\beta_i + k)}{k!\, \Gamma(-\nu (\beta_i + k)) \, \Gamma(\nu(\beta_i + k))} +
                    \frac{(1-q_i)^{\nu \beta_i}\Gamma(\beta_i - k) \, \Gamma(\bar{\beta} - \beta_i + k)}{q_i^{\nu \beta_i} \Gamma(- \nu k) \, \Gamma(\nu k)}. \label{D_K}
                \end{align}
            \end{proposition}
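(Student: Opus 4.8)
The plan is to start from the $H$-function representation \eqref{marginal1} and to extract its Mellin--Barnes residue series. Writing $z=(q_i/(1-q_i))^\nu$, the hypothesis $q_i<1/2$ is exactly the condition $z<1$ under which the $H^{2,2}_{3,3}$ admits a convergent power series obtained by summing the residues of its Mellin--Barnes integrand at the poles lying to the left of the contour; that this series exists is already secured by the growth conditions \eqref{conditions} together with Theorems 1.3 and 1.4 of \cite{Kilbas}. Reading off the parameters in \eqref{marginal1}, the integrand carries the numerator factors $\Gamma(s)$, $\Gamma(\varepsilon-\beta_i+s)$, $\Gamma(\beta_i-s)$, $\Gamma(\bar{\beta}-\varepsilon-s)$ and the denominator factors $\Gamma(\nu(\beta_i-s))$, $\Gamma(\nu(\varepsilon-\beta_i+s))$, multiplied by $z^{-s}$. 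The relevant (left) poles are those of the two numerator gammas whose arguments increase with $s$, namely $\Gamma(s)$, with poles at $s=-k$, and $\Gamma(\varepsilon-\beta_i+s)$, with poles at $s=\beta_i-\varepsilon-k$, $k=0,1,2,\dots$.

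First I would record why these two families stay separate: for $\beta_i$ not a positive integer and $\varepsilon>0$ small, the abscissae $-k$ and $\beta_i-\varepsilon-k$ never coincide, so every pole is simple and no confluent (logarithmic) terms appear. This is precisely the role played by the hypothesis on $\beta_i$. Using $\mathrm{Res}_{s=-k}\Gamma(s)=(-1)^k/k!$ and the standard simple-pole residue formula for the $H$-function, I would evaluate the remaining gamma factors at each pole. The family $s=-k$ produces a series in $z^{k}$, and the family $s=\beta_i-\varepsilon-k$ produces a series in $z^{\varepsilon-\beta_i+k}$; in each case the coefficient is an explicit ratio of gamma functions in $\beta_i$, $\bar{\beta}$, $\nu$, $\varepsilon$ and $k$.

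It then remains to let $\varepsilon\downarrow0$ and to restore $z=(q_i/(1-q_i))^\nu$. In the first family the $k=0$ term yields the leading constant $\Gamma(\bar{\beta})\Gamma(-\beta_i)\Gamma(\beta_i)/[\Gamma(-\nu\beta_i)\Gamma(\nu\beta_i)]$, while the terms $k\ge1$ yield, after simplification, the first summand of $D_k$ in \eqref{D_K}. In the second family the power $z^{-\beta_i}$ becomes $(1-q_i)^{\nu\beta_i}/q_i^{\nu\beta_i}$, and the terms $k\ge1$ yield the second summand of $D_k$. Reinstating the prefactor $q_i^{\nu\beta_i-1}(1-q_i)^{-(\nu\beta_i+1)}$ and the constant $1/[\nu\Gamma(\beta_i)\Gamma(\bar{\beta}-\beta_i)]$ carried outside the $H$-function in \eqref{marginal1} then reproduces \eqref{expansion}.

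The main obstacle is the limit $\varepsilon\downarrow0$ in the second family. At $k=0$ the denominator factor $\Gamma(-\nu k)=\Gamma(0)$ forces the whole term to vanish, so the second series legitimately begins at $k=1$; one must verify that this vanishing is genuine despite the competing singular factor arising at the $k=0$ index, and, more generally, that the limit may be taken term by term. Justifying the termwise passage to the limit, the convergence of the two resulting series for $z<1$ (guaranteed by \eqref{conditions}), and the closing-of-contour step that identifies the sum of residues with the integral (again controlled by $\sum_i(b_i-a_i)>0$ in \eqref{conditions}) is where the care is concentrated; the evaluation of the gamma arguments at the poles is then routine bookkeeping.
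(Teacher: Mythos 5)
Your proof is correct and takes essentially the same route as the paper: the paper's proof also starts from the $H^{2,2}_{3,3}$ representation \eqref{marginal1}, uses the hypothesis $\beta_i\notin\mathbb{N}^*$ to ensure the two families of poles of $\Gamma(s)$ and $\Gamma(\varepsilon-\beta_i+s)$ stay simple, invokes Theorem 1.3 of Kilbas--Saigo (which is precisely the Mellin--Barnes residue expansion you compute by hand, valid for $q_i<1/2$ by \eqref{conditions}), substitutes the parameters, and takes $\varepsilon\downarrow 0$. Your extra remarks --- the vanishing of the $k=0$ term of the second family through $1/\Gamma(0)=0$ and the emergence of the prefactor $\bigl((1-q_i)/q_i\bigr)^{\nu\beta_i}$ from the power $z^{-\beta_i}$ --- agree with the paper's bookkeeping, merely making explicit steps the paper leaves to the cited theorem.
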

            \begin{proof}
                Consider the $H$-function 
                $H^{2,2}_{3,3}$  in  \eqref{marginal1}. 
                If  $\beta_i$ is not a positive integer, we have $B_1 (b_2 + l) \ne B_2 (b_1+k)$ for $l,k=0,1,2,\ldots.$ Thus, thanks to \eqref{conditions}, from Theorem 1.3 of \cite{Kilbas},
                $H^{2,2}_{3,3}$  is an analytical function in $q_i^{\nu} / (1-q_i)^{\nu}$ and has the following power series expansion for $q_i< 1/2$:
                \begin{align}
                    & \sum_{k = 0}^{\infty} \frac{\Gamma(b_2 - k) \, \Gamma(1 - a_1 + k) \, \Gamma(1 - a_2 + k)}{\Gamma(1-b_3 + \nu k) \, \Gamma(a_3 - \nu k)} \frac{(-1)^k}{k!} \left(\frac{q_i}{1-q_i} \right)^{\nu k} \! \! +\left( \frac{q_i}{1-q_i} \right)^{\nu b_2} \notag \\
                    & + \sum_{k = 0}^{\infty} \frac{\Gamma(-b_2 - k)\, \Gamma(1 - a_1 + b_2 + k ) \, \Gamma(1 - a_2 + b_2 + k)}{\Gamma(1-b_3 + \nu(b_2+k))\, \Gamma(a_3 - \nu (b_2+k))} \frac{(-1)^k}{k!} \left( \frac{q_i}{1-q_i} \right)^{\nu k}.
                    \label{exp1bis}
                \end{align}
                The claim follows replacing $a_1 = 1 - \beta_i, a_2 = 1 - \bar{\beta} + \varepsilon, a_3 = \nu (\varepsilon-\beta_i), b_2 = \varepsilon-\beta_i$ and $b_3 = 1 - \nu \beta_i,$ in \eqref{exp1bis} and taking the limit as $\varepsilon \downarrow 0.$  
            \end{proof}
            \begin{remark} \label{2.5}
                If $\nu (\beta_i +k)$ is not a positive integer for $k=0,1,2,\ldots$ thanks to the reflection formula for the gamma function \cite{Kilbas}, we might simplify the expansion in \eqref{expansion} using 
                \begin{align}
                    \frac{\Gamma(-\beta_i - k) \, \Gamma(\beta_i + k)}{\Gamma(-\nu (\beta_i + k)) \, \Gamma(\nu(\beta_i + k))} = \nu \frac{\sin(\pi \nu (\beta_i+k))}{
                    \sin(\pi (\beta_i +k))}.
                    \label{refl}
                \end{align} 
                Similarly we get $\Gamma(-\nu k) \, \Gamma(\nu k) = - \pi / (k \nu \sin(\pi \nu k))$ if $\nu k$ is not a positive integer. 
            \end{remark}
            \begin{proposition}
                For $q_i > 1/2$ and $\bar{\beta}-\beta_i$ not a positive integer
                \begin{align}
                    & f_{Q_i} (q_i) = \frac{q_i^{-(\nu(
                    \bar{\beta}-\beta_i) +1)}(1 -q_i)^{\nu(\bar{\beta}-\beta_i)-1}}{\nu \Gamma(\beta_i)\Gamma(
                    \bar{\beta}-\beta_i)} \notag \\
                    & \times \left[
                    \Gamma( \bar{\beta})  \frac{ \Gamma(-(\bar{\beta}-\beta_i)) \Gamma(\bar{\beta}-\beta_i)}{\Gamma(-\nu(\bar{\beta}-\beta_i)) \Gamma(\nu(\bar{\beta}-\beta_i))}
                    + \sum_{k = 1}^{\infty} (-1)^k D_k \left(\frac{1-q_i}{q_i} \right)^{\nu k} \right] \label{expansion2}
                \end{align}
                where
                \begin{align}
                    D_k =  &\frac{\Gamma( \bar{\beta}+k)}{k!} 
                    \frac{ \Gamma(-(\bar{\beta}-\beta_i+k )) \Gamma(\bar{\beta}-\beta_i +k )}{\Gamma(-\nu(\bar{\beta}-\beta_i +k )) \Gamma(\nu(\bar{\beta}-\beta_i + k))}\!  
                    \\
                    & + \left(\frac{q_i}{1-q_i} \right)^{\nu(\bar{\beta} - \beta_i)} \frac{\Gamma(\beta_i + k) \, \Gamma(\bar{\beta} -  \beta_i - k)} {\Gamma(- \nu k) \, \Gamma(\nu k)}. \notag
                \end{align}
            \end{proposition}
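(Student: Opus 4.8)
The plan is to exploit the interchange symmetry already recorded in the remark following the Proposition that established \eqref{marginal1}: there it is noted that, by Properties 2.1, 2.3 and 2.5 of \cite{Kilbas}, the $H$-function in \eqref{marginal1} is invariant under the simultaneous substitution $\beta_i \leftrightarrow \bar{\beta}-\beta_i$ and $q_i \leftrightarrow 1-q_i$ (this being the analytic counterpart of commuting the two Mittag--Leffler functions in \eqref{MLH2}). Under this interchange the regime $q_i > 1/2$ is mapped to $1-q_i < 1/2$, so that the transformed argument $\bigl((1-q_i)/q_i\bigr)^{\nu}$ is strictly less than $1$. This places us precisely in the small-argument regime treated in the preceding Proposition, whose hypothesis $\beta_i \notin \mathbb{N}^*$ is transported to the hypothesis $\bar{\beta}-\beta_i \notin \mathbb{N}^*$ of the present statement.

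First I would rewrite the density \eqref{marginal1} in its interchanged form: the prefactor becomes $\frac{(1-q_i)^{\nu(\bar{\beta}-\beta_i)-1}\,q_i^{-(\nu(\bar{\beta}-\beta_i)+1)}}{\nu\,\Gamma(\beta_i)\Gamma(\bar{\beta}-\beta_i)}$, and the $H$-function now carries the argument $\bigl((1-q_i)/q_i\bigr)^{\nu}$. I would then apply the preceding Proposition verbatim to this interchanged representation, that is, read off the expansion \eqref{exp1bis} under the substitution $\beta_i \mapsto \bar{\beta}-\beta_i$, $q_i \mapsto 1-q_i$. Tracking the terms, the leading coefficient $\Gamma(\bar{\beta})\,\Gamma(-\beta_i)\Gamma(\beta_i)/\bigl(\Gamma(-\nu\beta_i)\Gamma(\nu\beta_i)\bigr)$ becomes $\Gamma(\bar{\beta})\,\Gamma(-(\bar{\beta}-\beta_i))\Gamma(\bar{\beta}-\beta_i)/\bigl(\Gamma(-\nu(\bar{\beta}-\beta_i))\Gamma(\nu(\bar{\beta}-\beta_i))\bigr)$, matching \eqref{expansion2}, while each of the two summands of $D_k$ in \eqref{D_K} transforms into the corresponding summand of the $D_k$ in \eqref{expansion2}; in particular the factor $(1-q_i)^{\nu\beta_i}/q_i^{\nu\beta_i}$ turns into $\bigl(q_i/(1-q_i)\bigr)^{\nu(\bar{\beta}-\beta_i)}$. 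This is routine bookkeeping once the substitution is fixed.

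The one point that genuinely needs checking is that the small-argument power-series representation survives the interchange. The preceding Proposition required $\beta_i \notin \mathbb{N}^*$ precisely to keep the two pole sequences of $\Gamma(b_2 - B_2 s)$ and $\Gamma(b_1 - B_1 s)$ from colliding (the condition $B_1(b_2+l) \neq B_2(b_1+k)$); under the interchange this non-coincidence condition becomes exactly $\bar{\beta}-\beta_i \notin \mathbb{N}^*$, which is the hypothesis here. I would also verify that the three global conditions in \eqref{conditions} are themselves symmetric under the interchange and hence still hold, so that Theorems 1.3 and 1.4 of \cite{Kilbas} remain applicable; since $\sum_i(B_i-A_i)=0$ and $\prod_i B_i^{B_i}/A_i^{A_i}=1$ depend only on the $A_i,B_i$ (which are unchanged), and $\sum_i(b_i-a_i)>0$ reduces again to $\bar{\beta}(1-\nu)+\nu(\bar{\beta}-\varepsilon)>0$, this is immediate. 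I expect this verification of the hypotheses to be the main (though minor) obstacle, the rest being symmetric reuse of the earlier computation.

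Alternatively, and equivalently, one may argue directly without the interchange: for $q_i > 1/2$ the argument $\bigl(q_i/(1-q_i)\bigr)^{\nu}$ of the $H$-function in \eqref{marginal1} exceeds $1$, so in place of Theorem 1.3 one invokes Theorem 1.4 of \cite{Kilbas}, which expands $H^{2,2}_{3,3}$ in descending powers of its argument, i.e.\ in ascending powers of $\bigl((1-q_i)/q_i\bigr)^{\nu}$, by collecting residues at the poles of $\Gamma(1-a_1+A_1 s)$ and $\Gamma(1-a_2+A_2 s)$ rather than at those of $\Gamma(b_1-B_1 s)$ and $\Gamma(b_2-B_2 s)$. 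The cost of this route is the bookkeeping of which pole sequences contribute and the careful passage to the limit $\varepsilon \downarrow 0$; the symmetry route is preferable precisely because it recycles the expansion already carried out in the preceding Proposition.
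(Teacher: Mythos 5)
Your proposal is correct, but your preferred route is not the one the paper takes: the paper proves \eqref{expansion2} directly, without invoking the interchange symmetry. It applies Theorem 1.4 of \cite{Kilbas} to the original $H$-function in \eqref{marginal1}, whose argument $\left(q_i/(1-q_i)\right)^{\nu}$ exceeds $1$ when $q_i>1/2$; the hypothesis that $\bar{\beta}-\beta_i$ is not a positive integer ensures the non-coincidence condition $A_1(1-a_2+l)\neq A_2(1-a_1+k)$, the expansion \eqref{exp1} in descending powers of the argument is read off (residues at the poles of the factors $\Gamma(1-a_j+A_js)$, $j=1,2$), and the claim follows on substituting $a_1=1-\beta_i$, $a_2=1-\bar{\beta}+\varepsilon$, $a_3=\nu(\varepsilon-\beta_i)$, $b_2=\varepsilon-\beta_i$, $b_3=1-\nu\beta_i$ and letting $\varepsilon\downarrow 0$ --- i.e.\ exactly the ``alternative'' you sketch in your final paragraph. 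Your main route --- commuting the two Mittag--Leffler factors in \eqref{MLH2} via Properties 2.1, 2.3 and 2.5 of \cite{Kilbas} and then recycling the $q_i<1/2$ proposition under $\beta_i\leftrightarrow\bar{\beta}-\beta_i$, $q_i\leftrightarrow 1-q_i$ --- is equally valid: the prefactor, the leading coefficient and both summands of $D_k$ in \eqref{D_K} transform exactly as you record ($\bar{\beta}$ being invariant under the swap), and the two checks you single out are indeed the only delicate points and go through: the Theorem 1.3 non-coincidence hypothesis transports to $\bar{\beta}-\beta_i\notin\mathbb{N}^*$, and the conditions \eqref{conditions} depend only on the $(A_i,B_i)$ (unchanged) and on $\delta_1+\delta_2=\bar{\beta}$ and $\eta=\bar{\beta}-\varepsilon$ (symmetric), so Theorems 1.3 and 1.4 remain applicable. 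What your symmetry route buys is economy: no second residue computation, the large-argument regime reduced wholesale to the small-argument one already treated. What the paper's direct route buys is self-containedness: it does not lean on the interchange identity of the intervening remark, and it handles the $\varepsilon\downarrow 0$ limit explicitly in the representation where it is used.
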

            \begin{proof}
                Consider again the $H$-function 
                $H^{2,2}_{3,3}$  in  \eqref{marginal1}.
                If $\bar{\beta}-\beta_i$ is not a positive integer, we have $A_1 (1-a_2 + l) \ne A_2 (1-a_1+k)$ for $l,k=0,1,2,\ldots$ 
                From \eqref{conditions} and Theorem 1.4 of \cite{Kilbas},
                $H^{2,2}_{3,3}$  is an analytical function in $q_i^{\nu} / (1-q_i)^{\nu}$ and has the following power series expansion for $q_i > 1/2$:
                \begin{align}
                    &\!\! \sum_{k = 0}^{\infty} \frac{\Gamma(1-a_1 + k) \, \Gamma(b_2 + 1 - a_1 +k) \, \Gamma(a_1 - a_2 - k)}{\Gamma(a_3 + \nu(1 - a_1 + k)) \, \Gamma(1-b_3 - \nu(1-a_1+k))} \frac{(-1)^k}{k!} \left[\frac{q_i}{1-q_i} \right]^{\!\nu (a_1-1-k)} \notag \\
                    & \!\!\!\! + \sum_{k = 0}^{\infty} \! \frac{\Gamma(1-a_2 + k) \, \Gamma(b_2 + 1 - a_2 +k) \, \Gamma(a_2 - a_1 - k)}{\Gamma(a_3 + \nu(1 - a_2 + k)) \, \Gamma(1-b_3 - \nu(1-a_2+k))} \frac{(-1)^k}{k!}\!\! \left[\frac{q_i}{1-q_i} \right]^{\!\nu (a_2-1-k)}
                    \label{exp1}
                \end{align}
                The claim follows replacing $a_1 = 1 - \beta_i, a_2 = 1 - \bar{\beta} + \varepsilon, a_3 = \nu (\varepsilon-\beta_i), b_2 = \varepsilon-\beta_i$ and $b_3 = 1 - \nu \beta_i,$ in \eqref{exp1} and taking the limit as $\varepsilon \downarrow 0.$ 
            \end{proof}
                By using the reflection formula for the gamma function, also the expansion  \eqref{expansion2} might be simplified similarly to what has been addressed in Remark \ref{2.5}.

    \section{An alternative generalization}

        \noindent    We now give an alternative generalization
        with desirable properties which in addition approximates the fractional Dirichlet distribution with density \eqref{integralformula} for appropriate values of the parameters. 
    
        Let us thus consider a random vector $\bm{Q} = (Q_1,\dots,Q_{n-1})$, $n \ge 2$, with the following probability density function:  
	    \begin{align}\label{closedform}
			f_{\bm{Q}}(q_1,\dots,q_{n-1}) = {} &\left( \prod_{i=1}^{n-1} q_i^{\nu \beta_i-1} \right)
			\left( 1-\sum_{i=1}^{n-1} q_i\right)^{\nu \beta_n-1} \\
			& \times \frac{\nu^{n-1}  \Gamma(\bar{\beta})}{\Gamma(\beta_1) \cdot \ldots \cdot \Gamma(\beta_n)} \biggl(q^{\nu}_1+\cdots+ \Bigl(1- \sum_{i=1}^{n-1}q_i\Bigr)^{\nu}\biggr)^{-\bar{\beta}}, \notag
		\end{align}
		for $q_1, \ldots, q_{n-1} \in (0,1)$, $q_1+\dots+q_{n-1}<1$, $\nu >0$, $\beta_i >0$, $i=1,\dots n$, $\bar{\beta} = \beta_1+\dots\beta_n$.
		
		For the sake of clarity we check that $f_{\bm{Q}}(q_1,\dots,q_{n-1})$ as given in (\ref{closedform}) is a genuine probability density function. This will follow by proving that \begin{align}
		    \frac{\nu^{n-1}  \Gamma(\bar{\beta})}{\Gamma(\beta_1) \cdot \ldots \cdot \Gamma(\beta_n)}
		\end{align} in the rhs of (\ref{closedform}) plays the role of a normalization coefficient.

        \begin{theorem}
            We have
            \begin{align} \label{(eqintegrale)}
                \int_0^{1} {\rm d}q_1 \cdots & \int_0^{1-q_1-\cdots-q_{n-2}} \!\!\!\!\!\!\!\!\!\! {\rm d}q_{n-1} \left( \prod_{i=1}^{n-1} q_i^{\nu \beta_i-1} \right)  \left( 1-\sum_{i=1}^{n-1} q_i\right)^{\nu \beta_n-1} \\
			    & \times \biggl(q^{\nu}_1+\cdots+ \Bigl(1- \sum_{i=1}^{n-1}q_i\Bigr)^{\nu}\biggr)^{-\bar{\beta}} =  \frac{\Gamma(\beta_1) \cdot \ldots \cdot \Gamma(\beta_n)}{\nu^{n-1}  \Gamma(\bar{\beta})}. \notag
            \end{align}
        \end{theorem}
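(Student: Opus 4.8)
The plan is to evaluate the simplex integral on the left-hand side of \eqref{(eqintegrale)} by realizing it as the \emph{angular part} of an auxiliary integral over the positive orthant $\mathbb{R}_+^n$, which can be computed independently by an elementary change of variables. Throughout I set $q_n = 1 - \sum_{i=1}^{n-1} q_i$, so that the integrand becomes the fully symmetric expression $\bigl(\prod_{i=1}^n q_i^{\nu\beta_i-1}\bigr)\bigl(\sum_{i=1}^n q_i^\nu\bigr)^{-\bar{\beta}}$ integrated over the simplex $\Delta = \{q_i>0,\ \sum_{i=1}^n q_i = 1\}$; call this quantity $\mathcal{I}$, which is exactly the left-hand side of \eqref{(eqintegrale)}. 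Since every factor is nonnegative, Tonelli's theorem legitimates all interchanges of integration used below without further comment.

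The key device is the auxiliary integral
\begin{align}
I = \int_{\mathbb{R}_+^n} \Bigl(\prod_{i=1}^n z_i^{\nu\beta_i-1}\Bigr)\, \exp\!\Bigl(-\sum_{i=1}^n z_i^\nu\Bigr)\, \mathrm dz_1\cdots \mathrm dz_n,
\end{align}
which I propose to compute in two different ways. For the first evaluation I substitute $u_i = z_i^\nu$ in each coordinate; using $z_i^{\nu\beta_i-1}\,\mathrm dz_i = \nu^{-1} u_i^{\beta_i-1}\,\mathrm du_i$, the integral separates into a product of Euler gamma integrals, giving $I = \nu^{-n}\prod_{i=1}^n \Gamma(\beta_i)$.

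For the second evaluation I pass to polar-type coordinates adapted to the simplex: write $z_i = W q_i$ with $W = \sum_j z_j > 0$ and $(q_1,\dots,q_{n-1})\in\Delta$, whose Jacobian is $W^{n-1}$ (the very factor already used in deriving \eqref{integralformula}). The homogeneities $\prod_i (Wq_i)^{\nu\beta_i-1} = W^{\nu\bar{\beta}-n}\prod_i q_i^{\nu\beta_i-1}$ and $\sum_i (Wq_i)^\nu = W^\nu\sum_i q_i^\nu$ decouple the radial integral in $W$ from the angular integral over $\Delta$. Substituting $t = W^\nu\sum_i q_i^\nu$ in the radial part collapses it to $\nu^{-1}\bigl(\sum_i q_i^\nu\bigr)^{-\bar{\beta}}\int_0^\infty t^{\bar{\beta}-1}e^{-t}\,\mathrm dt = \nu^{-1}\Gamma(\bar{\beta})\bigl(\sum_i q_i^\nu\bigr)^{-\bar{\beta}}$, whence $I = \nu^{-1}\Gamma(\bar{\beta})\,\mathcal{I}$. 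Equating the two expressions for $I$ and cancelling yields $\mathcal{I} = \prod_{i=1}^n\Gamma(\beta_i)\big/\bigl(\nu^{n-1}\Gamma(\bar{\beta})\bigr)$, which is precisely \eqref{(eqintegrale)}.

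The computations are elementary; the only delicate points are bookkeeping ones. The step to get right is the radial substitution $t = W^\nu\sum_i q_i^\nu$ together with the exponent it produces from $W^{\nu\bar{\beta}-1}\,\mathrm dW$: one must verify that the powers of $\sum_i q_i^\nu$ recombine to give exactly $(\sum_i q_i^\nu)^{-\bar{\beta}}$, so that the angular factor reproduces $\mathcal{I}$ itself rather than a different weighting of it. A secondary point is the convergence of $I$, which holds because $\beta_i>0$ makes $u_i^{\beta_i-1}$ integrable at the origin while the exponential controls the tail; positivity of the integrand then renders the Tonelli interchanges automatic.
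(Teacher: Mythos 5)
Your proof is correct, and it takes a genuinely different route from the paper's. The paper maps the simplex to the positive orthant via $z_i=(1-\bar{q})/q_i$, rescales with $t_i=z_i^\nu$, and then evaluates the resulting integral \eqref{transformation3} by an $(n-1)$-fold iteration, integrating out one coordinate at a time through the Mellin transform of $(1+z)^{-\bar{\beta}}$ (repeated Beta integrals), with a shortcut via a tabulated integral recorded in Remark \ref{remrem}. You instead use the classical Gamma-factorization device behind the usual normalization of the Dirichlet distribution: evaluate the auxiliary integral $I=\int_{\mathbb{R}_+^n}\bigl(\prod_{i=1}^n z_i^{\nu\beta_i-1}\bigr)e^{-\sum_{i=1}^n z_i^\nu}\,\mathrm dz_1\cdots\mathrm dz_n$ once by separation of variables (the substitution $u_i=z_i^\nu$ gives $\nu^{-n}\prod_{i=1}^n\Gamma(\beta_i)$) and once by the radial--simplicial change of variables $z_i=Wq_i$ with Jacobian $W^{n-1}$, whose radial part collapses to $\nu^{-1}\Gamma(\bar{\beta})\bigl(\sum_{i=1}^n q_i^\nu\bigr)^{-\bar{\beta}}$; equating the two evaluations gives \eqref{(eqintegrale)}. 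The exponent bookkeeping in your radial substitution $t=W^\nu\sum_i q_i^\nu$ is right ($W^{\nu\bar{\beta}-1}\,\mathrm dW = \nu^{-1}t^{\bar{\beta}-1}\bigl(\sum_i q_i^\nu\bigr)^{-\bar{\beta}}\,\mathrm dt$), and Tonelli covers all interchanges since the integrand is nonnegative. Your argument is shorter, avoids the induction entirely, is uniform in $n$, and makes the probabilistic content transparent: it exhibits the density \eqref{closedform} as the law of normalized independent generalized Gamma variables with pdf \eqref{genGAMMA}, a fact the paper only records after the theorem, in a remark. The paper's computation, by contrast, stays within explicit change-of-variables manipulations consistent with the Mellin/$H$-function toolkit it uses elsewhere, at the cost of considerably more bookkeeping.
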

        \begin{proof}
            Observe that the lhs of \eqref{(eqintegrale)} can be rewritten as
            \begin{align}\label{(eqintegrale1)}
                I = \int_0^{1} & {\rm d}q_1 \cdots \int_0^{1-q_1-\cdots-q_{n-2}} \!\!\!\!\!\! {\rm d}q_{n-1} \left( \prod_{i=1}^{n-1} q_i^{-1} \right) \\ & \times (1-\bar{q})^{-1} \prod_{i=1}^{n-1} \left( \frac{q_i}{1-\bar{q}} \right)^{\nu \beta_i}
                \left( 1+ \sum_{i=1}^{n-1} \left(\frac{q_i}{1-\bar{q}}\right)^{\nu} \right)^{-\bar{\beta}}, \notag
            \end{align}
            where $\bar{q}=q_1+ \cdots + q_{n-1}$.
            Apply the change of variables $(1-\bar{q})/q_i = z_i$ for $i=1, \ldots, n-1$, in multivariate integration.
            Thus, we have 
            \begin{align}\label{transformation1}
                & q_i = \frac{\prod_{j \in {\mathcal I}_{n-1,i}}  z_j}{\prod_{j=1}^{n-1} z_j + \sum_{k=1}^{n-1} \prod_{j \in {\mathcal I}_{n-1,k}}  z_j}, \qquad i = 1, \ldots, n-1,  \\
                &J= \frac{\prod_{j=1}^{n-1} z_j^{n-2}}{\left(\prod_{j=1}^{n-1} z_j + \sum_{k=1}^{n-1} \prod_{j \in {\mathcal I}_{n-1,k}}  z_j\right)^n}, \notag
            \end{align}
            where ${\mathcal I}_{n-1,k} = \{1, \ldots, k-1, k+1, \ldots, n-1\}$ for $k=1, \ldots, n-1$ and $J$ is the Jacobian of the transformation. Note that 
            \begin{align}
                1-\bar{q} = \frac{\prod_{j=1}^{n-1} z_j}{\left(\prod_{j=1}^{n-1} z_j + \sum_{k=1}^{n-1} \prod_{j \in{\mathcal I}_{n-1,k}}  z_j\right)}.
                \label{transformation2}
            \end{align}
            By putting \eqref{transformation1} and \eqref{transformation2} in \eqref{(eqintegrale1)} we have
            \begin{align}
                I = \int_0^{\infty} {\rm d}z_1 \cdots \int_0^{\infty} {\rm d}z_{n-1} \prod_{i=1}^{n-1} z_j^{- \nu \beta_j -1 } \left( 1 + \sum_{j=1}^{n-1} \frac{1}{z_j^{\nu}} \right)^{- \bar{\beta}}.
                \label{transformation3}
            \end{align}
            Apply  the  change  of  variables $z_i^{\nu} = t_i$ for $i=  1,...,n-1$, in multivariate integration. Then, we have $z_i = t_i^{1/\nu}$ for $i=  1,...,n-1$,
            and $\nu^{1-n}\prod_{i=1}^{n-1} t_i^{1/\nu-1}$ is the Jacobian of this transformation. From 
            \eqref{transformation3}
            \begin{align}
                I = \frac{1}{\nu^{n-1}} \int_0^{\infty} {\rm d}t_1 \cdots \int_0^{\infty} {\rm d}t_{n-2}  \: I_{n-2}(t_1, \ldots, t_{n-2}),
                \label{transformation4}
            \end{align}
            where
            \begin{align}
                I_{n-2}(t_1, \ldots, t_{n-2}) = \int_0^{\infty} {\rm d}t_{n-1}
                \prod_{i=1}^{n-1} t_j^{- \beta_j -1 } \left( 1 + \sum_{j=1}^{n-1} \frac{1}{t_j} \right)^{- \bar{\beta}} .
                \label{transformation4bis}
            \end{align}
            Observe that $I_{n-2}(t_1, \ldots, t_{n-2})$ in \eqref{transformation4bis}  can be rewritten as
            \begin{align}\label{transformation5} 
                I_{n-2}&(t_1, \ldots, t_{n-2}) \\
                = {} & \int_0^{\infty} {\rm d}t_{n-1}
                \prod_{j=1}^{n-1} t_j^{ \bar{\beta}- \beta_j -1 } \left( \prod_{j=1}^{n-1} t_j + \sum_{k=1}^{n-1} \prod_{j \in {\mathcal I}_{n-1,k}} t_j \right)^{-\bar{\beta}} \notag\\
                = {} &\prod_{i=1}^{n-2} t_i^{-\beta_i-1} \notag \\
                & \times \int_0^{\infty} {\rm d}t_{n-1} \left( 1 + t_{n-1} \frac{\prod_{i=1}^{n-2} t_i + \sum_{k=1}^{n-2} \prod_{i \in {\mathcal I}_{n-2,k}} t_i}{\prod_{i=1}^{n-2} t_i} \right)^{-\bar{\beta}} t_{n-1}^{\bar{\beta} - \beta_{n-1}-1}. \notag
            \end{align}
            With the change of variable 
            $z = t_{n-1} (  \prod_{i=1}^{n-2} t_i + \sum_{k=1}^{n-2} \prod_{i \in {\mathcal I}_{n-2,k}} t_i)/
            \prod_{i=1}^{n-2} t_i$ and by recalling the Mellin trasform of $(1+z)^{-\bar{\beta}}$, we recover
            \begin{align}\label{transformation7}
                & I_{n-2}(t_1, \ldots, t_{n-2}) \\
                & = \frac{
                \prod_{i=1}^{n-2} t_i^{\bar{\beta}-\beta_i-\beta_{n-1} -1}}{(\prod_{i=1}^{n-2} t_i + \sum_{k=1}^{n-2} \prod_{i \in {\mathcal I}_{n-2,k}} t_i)^{\bar{\beta}-\beta_{n-1}}} \frac{\Gamma(\bar{\beta} - \beta_{n-1}) \Gamma(\beta_{n-1}) }{\Gamma(\bar{\beta})}. \notag
            \end{align} 
            Now, replace $I_{n-2}(t_1, \ldots, t_{n-2})$ in \eqref{transformation4} with the closed 
            form 
            \eqref{transformation7}. This leads us to 
            \begin{align}
                I = \frac{1}{\nu^{n-1}}\frac{\Gamma(\bar{\beta} - \beta_{n-1}) \Gamma(\beta_{n-1}) }{\Gamma(\bar{\beta})} \int_0^{\infty} {\rm d}t_1 \cdots \int_0^{\infty}  {\rm d}t_{n-3} I_{n-3}(t_1, \ldots, t_{n-3}) ,
                \label{transformation8}
            \end{align}
            where
            \begin{align}\label{transformation9}
                &I_{n-3}(t_1, \ldots, t_{n-3}) \\
                & = \int_0^{\infty} {\rm d}t_{n-2}
                \prod_{j=1}^{n-2} t_j^{ \bar{\beta} -\beta_{n-1} - \beta_j  -1 } \left( \prod_{j=1}^{n-2} t_j + \sum_{k=1}^{n-2} \prod_{j \in {\mathcal I}_{n-2,k}} t_j \right)^{-(\bar{\beta}-\beta_{n-1}) }. \notag
            \end{align}
            By comparing the integral in \eqref{transformation9} with that in \eqref{transformation5}, we observe that the former has the same expression of the latter with $\bar{\beta}$ replaced by $\bar{\beta} - \beta_{n-1}.$ Thus, by recurring to the same arguments employed to compute
            $I_{n-2}(t_1, \ldots, t_{n-2})$ we recover
            \begin{align} \label{transformation10}
                I_{n-3}(t_1, \ldots, t_{n-3}) = {} & \frac{
                \prod_{i=1}^{n-3} t_i^{\bar{\beta}-\beta_{n-1} -\beta_{n-2} 
                -\beta_i -1}}{(\prod_{i=1}^{n-3} t_i + \sum_{k=1}^{n-3} \prod_{i \in {\mathcal I}_{n-3,k}} t_i)^{\bar{\beta}-\beta_{n-1} - \beta_{n-2}}} \\
                & \times \frac{\Gamma(\bar{\beta} - \beta_{n-1} - \beta_{n-2}) \Gamma(\beta_{n-2}) }{\Gamma(\bar{\beta} - \beta_{n-1})}. \notag
            \end{align} 
            Replacing $I_{n-3}(t_1, \ldots, t_{n-3})$ in \eqref{transformation8} with the closed form 
            \eqref{transformation10} we get 
            \begin{align} \label{transformation11}
                I = {} & \frac{1}{\nu^{n-1}}\frac{\Gamma(\bar{\beta} - \beta_{n-1} - \beta_{n-2}) \Gamma(\beta_{n-1}) \Gamma(\beta_{n-2}) }{\Gamma(\bar{\beta})} \\ &\times \int_0^{\infty} {\rm d}t_1 \cdots \int_0^{\infty}   {\rm d}t_{n-4} I_{n-4}(t_1, \ldots, t_{n-4}), \notag
            \end{align}
            where
            \begin{align} \label{transformation12}
                I_{n-4}(t_1, \ldots, t_{n-4}) = {} & \int_0^{\infty} {\rm d}t_{n-3}
                \prod_{j=1}^{n-3} t_j^{ \bar{\beta} -\beta_{n-1} - \beta_{n-2} - \beta_j  -1 } \\
                & \times \left( \prod_{j=1}^{n-3} t_j + \sum_{k=1}^{n-3} \prod_{j \in {\mathcal I}_{n-3,k}} t_j \right)^{-(\bar{\beta}-\beta_{n-1} - \beta_{n-2}) }, \notag
            \end{align}
            which indeed has the same expression of $I_{n-2}$ and $I_{n-3}$ with  suitable updates of $\bar{\beta}$.
            The result follows by iterating from $i=4$ up to $i=n-1$ the computation of
            \begin{align}
                I = {} & \frac{1}{\nu^{n-1}}
                \frac{\Gamma(\bar{\beta} - \sum_{k=n-i+2}^{n-1} \beta_k) \prod_{k=n-i+2}^{n-1} \Gamma(\beta_k) }{\Gamma(\bar{\beta})} \\ & \times \int_0^{\infty} {\rm d}t_1 \cdots \int_0^{\infty}   {\rm d}t_{n-i} I_{n-i}(t_1, \ldots, t_{n-i}) \notag
            \end{align}
            with
            \begin{align}
                I_{n-i}(t_1, \ldots, t_{n-i}) = {} & \int_0^{\infty} {\rm d}t_{n-i+1}
                \prod_{j=1}^{n-i+1} t_j^{ \bar{\beta} - \sum_{k=n-i+2}^{n-1} \beta_{k}  - \beta_j  -1 } \\
                & \times \left( \prod_{j=1}^{n-i+1} t_j + \sum_{k=1}^{n-i+1} \prod_{j \in {\mathcal I}_{n-i+1,k}} t_j \right)^{-(\bar{\beta}-\sum_{k=n-i+2}^{n-1} \beta_{k}) }. \notag
            \end{align}
            We obtain the closed form expression
            \begin{align}\label{transformation13}
                I_{n-i}(t_1, \ldots, t_{n-i}) = {} & \frac{
                \prod_{j=1}^{n-i} t_j^{\bar{\beta}-\sum_{k=n-i+1}^{n-1} \beta_k
                -\beta_j -1}}{(\prod_{i=1}^{n-i} t_i + \sum_{k=1}^{n-i} 
                \prod_{i \in {\mathcal I}_{n-i,k}} t_i)^{\bar{\beta}-\sum_{k=n-i+1}^{n-1} \beta_k}} \\
                & \times \frac{\Gamma(\bar{\beta} - \sum_{k=n-i+i}^{n-1} \beta_{k}) \Gamma(\beta_{n-i+1}) }{\Gamma(\bar{\beta} - \sum_{k=n-i+2}^{n-1} \beta_{k})} \notag
            \end{align}
            with $\sum_{k=1}^{n-i} 
            \prod_{i \in {\mathcal I}_{n-i,k}} t_i = 1$ for $i=n-1.$ The last replacement with $I_1(t_1)$ gives
            \begin{align}
                I = \frac{1}{\nu^{n-1}}\frac{\Gamma(\beta_1+\beta_n) \Gamma(\beta_2) \cdots \Gamma(\beta_{n-1}) }{\Gamma(\bar{\beta})} \int_0^{\infty} {\rm d}t_1 (1+t_1)^{-(\beta_1+\beta_n)} t_1^{\beta_n-1} 
                \label{transformation14}
            \end{align}
            from which the claimed result follows by observing that
            $\int_0^{\infty} {\rm d}t_1 t_1^{\beta_n-1} (1+t_1)^{-(\beta_1+\beta_n)} = \Gamma(\beta_n) \Gamma(\beta_1)/\Gamma(\beta_1+\beta_n).$
        \end{proof}

        \begin{remark}\label{remrem}
            Alternatively, in \eqref{transformation3} use the transformation $z_1^{-1}=t_1, \ldots, z_n^{-1}=t_n$. Then, we have (cf.\ \cite{Gradshteyn} no.\ 4.638/3, p.\ 649)
            \begin{align}
                \label{transformation4ter}
                I & = \int_0^{\infty} {\rm d}t_1 \cdots \int_0^{\infty} {\rm d}t_{n-1}\frac{ \prod_{i=1}^{n-1} t_j^{\nu \beta_j -1 }}{(1+t_1^{\nu} +\cdots + t_{n-1}^{\nu})^{\bar{\beta}}} \\
                & = \frac{\Gamma(\beta_1) \cdot \ldots \cdot \Gamma(\beta_{n-1})}{\nu^{n-1}}\frac{\Gamma(\bar{\beta}-\beta_1 - \cdots - \beta_{n-1})}{\Gamma(\bar{\beta})}, \notag
            \end{align}
            which is in agreement with  \eqref{(eqintegrale)}.
        \end{remark}
        
        \begin{remark}		
        	On the $n$-dimensional simplex $\Delta_n$ the probability density of the random vector $(Q_1,\dots,Q_n)$, where $\sum_n Q_n = 1$ a.s., writes
    		\begin{align}\label{closed2}
    			&\mathbb{P}\left( (Q_1, \dots, Q_n) \in \mathrm d (q_1, \dots, q_n) \right) \\
		    	&= \frac{\nu^{n-1}  \Gamma(\bar{\beta})}{\Gamma(\beta_1) \cdots \Gamma(\beta_n)} (q^{\nu}_1+\cdots+ q_n^\nu)^{-\bar{\beta}}
		    	\prod_{i=1}^n q_i^{\nu \beta_i-1} \notag \\
                &= \frac{\nu^{n-1}}{B(\bm{\beta})}
                \prod_{i=1}^n q_i^{-1} \left(\frac{q_i^{\nu}}{\sum_{i=1}^n q_i^\nu}\right)^{\beta_i}, \notag
		    \end{align}
		    with $B(\bm{\beta})=\prod_{i=1}^n \Gamma(\beta_i)/\Gamma(\sum_{i=1}^n \beta_i)$.
		    In short we write $(Q_1,\dots,Q_n) \sim \text{GDIR}(\nu,\bm{\beta})$.
		    
            Notice that for $\nu=1$ the Dirichlet$(\bm{\beta})$ is obtained. In this case the random vector
		    $(Q_1,\dots,Q_n)$ is uniformly distributed on $\Delta_n$ for $\beta_i=1$, $i \in \mathbb{N}^*$.	If instead we only let $\beta_i=1$,
		    \begin{align}
			    \mathbb{P}\left( (Q_1, \dots, Q_n) \in \mathrm d (q_1, \dots, q_n) \right)
                = \frac{\nu^{n-1}  (n-1)!}{(q^{\nu}_1+\cdots+ q_n^\nu)^{n}}
	    		\prod_{i=1}^n q_i^{\nu -1}
    		\end{align}
		    which is symmetric but clearly not uniform.
		    If $\beta_i=1/\nu$ (again symmetric) we obtain
		    \begin{align}
			    \mathbb{P}\left( (Q_1, \dots, Q_n) \in \mathrm d (q_1, \dots, q_n) \right)
			    = \frac{\nu^{n-1}  \Gamma(n/\nu)}{\Gamma(1/\nu)^n} (q^{\nu}_1+\cdots+ q_n^\nu)^{-n/\nu}
		    \end{align}
		\end{remark}
		
		\begin{remark}
		    The alternative generalized Dirichlet distribution considered in this section (i.e.\ that with pdf \eqref{closedform}) can be derived by the same procedure described in Section \ref{sec} with $(Z_i)^\nu$ distributed as Gamma$(\beta_i,1)$, $i=1,\dots,n-1$.
		    Note that the random variable $X$ such that
		    $X^\nu$, $\nu >0$, is Gamma$(\alpha,1)$-distributed, $\alpha>0$, is a special case of the
		    \emph{generalized Gamma} distribution (see e.g.\ \cite{kotz}, Section 8.7). In particular, $X$ has pdf
		    \begin{align}
		        f_X(x) = \nu\frac{x^{\nu\alpha-1} e^{-x^\nu}}{\Gamma(\alpha)} \bm{1}_{\mathbb{R}_+},
		        \label{genGAMMA}
		    \end{align}
		    and Laplace transform (from (2.3.23) of \cite{mathai} and the definition of Wright functions)
		    \begin{align}
		        \mathbb{E} e^{-zX} = \frac{\nu z^{-\nu \alpha}}{\Gamma(\alpha)} \sum_{k=0}^\infty \frac{(-z^{-\nu})^k}{k!}\Gamma(\nu(k+\alpha)).
		    \end{align}
		\end{remark}
		\begin{remark}
            The generalized Dirichlet pdf 
	           \eqref{closedform}
	           turns out to be a reasonably good approximation of the fractional Dirichlet pdf \eqref{integralformula} for 
	            $\beta_i < 1$ (see for example 
	            Fig.~\ref{fig2}). A partial explanation is that for $\lambda=1, \beta <1,$ and $\nu \in (0,1]$ the fractional Gamma pdf \eqref{miche} has a rather similar shape to the generalized Gamma pdf \eqref{genGAMMA}, as Fig.~\ref{Fig5} shows for $\beta=0.2$ and $\beta=0.4$. For $\beta_i>1,$
	            the fractional  Dirichlet pdf exhibits a behaviour different from the generalized  Dirichlet pdf (see for example
	            Fig.~\ref{fig1}). Indeed, Fig.~\ref{Fig5} shows a different shape of the fractional Gamma pdf compared to the generalized Gamma pdf for $\beta=2$ and $\beta=3$. 
	            \begin{figure}
                    \includegraphics[scale=.7]{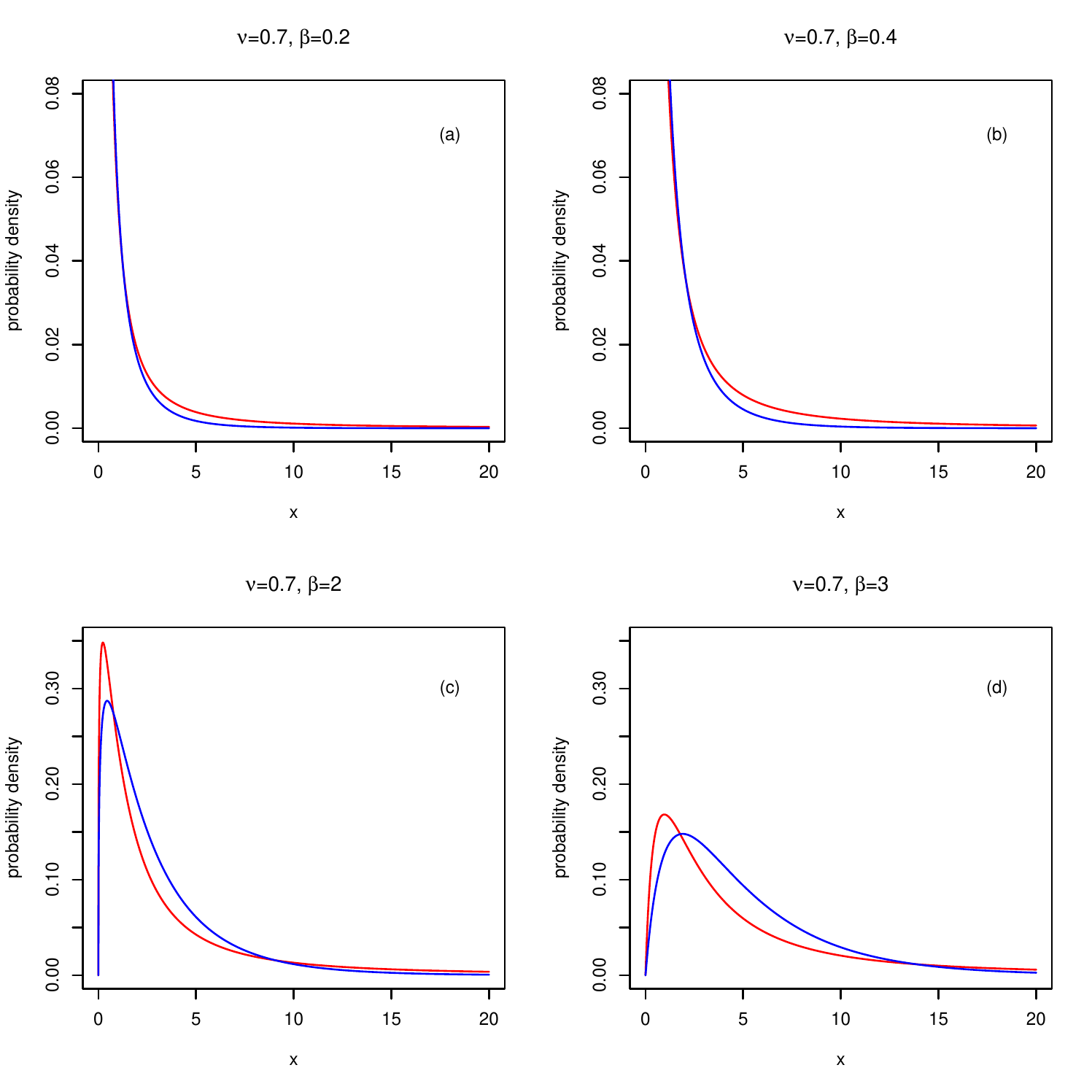}
                    \caption{\label{Fig5}Comparison of the fractional Gamma pdf \eqref{integralformula}
                    (red line) versus the generalized Gamma pdf \eqref{closedform}
                    (blue line) for $\nu=0.7$ and $\beta=0.2$ in (a) and $\beta=0.4$ in (b), as in Fig.~\ref{fig2},  $\beta=2$ in (c) and $\beta=3$ in (d), as in Fig.~\ref{fig1}. }
                \end{figure}
		    \end{remark}
		    \begin{proposition}[Conjugate distribution]
    		    The generalized Dirichlet distribution GDIR$(\nu,\bm{\beta})$ (with pdf \eqref{closed2}) is the conjugate prior to a re-parametrized Multinomial distribution with pmf
		        \begin{align}\label{lik}
		            \frac{N}{x_1!\cdot \ldots \cdot x_n!} \left( \prod_{i=1}^n q_i^{\nu x_i} \right)\left( q_1^\nu + \dots + q_n^\nu\right)^{- \sum_{i=1}^n x_i},
		        \end{align}
		        where $N \in \{1,2,\dots\}$, $x_i \in \{0, \dots, N\}$, $i = 1,\dots, n$, $n \in \{1,2,\dots\}$, $q_1 + \dots +q_n = 1$, $\nu >0$.
		        In particular, if the prior is GDIR$(\nu,\bm{\beta})$ and the likelihood is as in \eqref{lik}, then the posterior becomes GDIR$(\nu, \bm{\beta}+\bm{x})$.
		    \end{proposition}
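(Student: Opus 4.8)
The plan is to verify conjugacy directly through Bayes' rule. For fixed data $\bm{x}=(x_1,\dots,x_n)$ the posterior density of $(Q_1,\dots,Q_n)$ is proportional, as a function of $(q_1,\dots,q_n)$ on the simplex $\Delta_n$, to the product of the prior \eqref{closed2} and the likelihood \eqref{lik}. I would therefore strip away every factor not depending on $(q_1,\dots,q_n)$ and show that the surviving kernel is exactly that of a $\text{GDIR}(\nu,\bm{\beta}+\bm{x})$ distribution.

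First I would isolate the $\bm{q}$-dependent part of the prior \eqref{closed2}, namely $\prod_{i=1}^n q_i^{\nu\beta_i-1}\,(q_1^\nu+\cdots+q_n^\nu)^{-\bar{\beta}}$, and of the likelihood \eqref{lik}, namely $\prod_{i=1}^n q_i^{\nu x_i}\,(q_1^\nu+\cdots+q_n^\nu)^{-\sum_{i=1}^n x_i}$. Multiplying the two and combining the exponents I obtain
\begin{align*}
\prod_{i=1}^n q_i^{\nu(\beta_i+x_i)-1}\,\bigl(q_1^\nu+\cdots+q_n^\nu\bigr)^{-\sum_{i=1}^n(\beta_i+x_i)},
\end{align*}
where I have used $\bar{\beta}+\sum_{i=1}^n x_i=\sum_{i=1}^n(\beta_i+x_i)=\overline{\beta+x}$. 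This is precisely the $\bm{q}$-dependent kernel appearing in \eqref{closed2} with each $\beta_i$ replaced by $\beta_i+x_i$, which already establishes the stated update rule at the level of the unnormalized density.

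To promote this proportionality to an identity of probability densities, I would invoke the Theorem proved above: it guarantees that the kernel $\prod_{i=1}^n q_i^{\nu\gamma_i-1}(\sum_{j=1}^n q_j^\nu)^{-\bar{\gamma}}$ integrates over $\Delta_n$ to $\prod_{i=1}^n\Gamma(\gamma_i)/(\nu^{n-1}\Gamma(\bar{\gamma}))$ for any admissible parameters $\bm{\gamma}$. Applying this with $\gamma_i=\beta_i+x_i$ fixes the normalizing constant of the posterior to be $\nu^{n-1}\Gamma(\overline{\beta+x})/\prod_{i=1}^n\Gamma(\beta_i+x_i)$, so the posterior is $\text{GDIR}(\nu,\bm{\beta}+\bm{x})$. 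Since the prior already belongs to this parametric family, conjugacy follows.

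There is no genuine analytic obstacle here; the only point requiring care is the bookkeeping of the multiplicative constants. The factor $N/(x_1!\cdots x_n!)$ from \eqref{lik} and the prior normalization $\nu^{n-1}\Gamma(\bar{\beta})/\prod_{i=1}^n\Gamma(\beta_i)$ are all independent of $\bm{q}$, and are therefore automatically reabsorbed into the posterior normalization supplied by the Theorem, so that no explicit evaluation of the marginal likelihood $\int_{\Delta_n}\text{prior}\times\text{likelihood}\,\mathrm d\bm{q}$ is needed. I would also remark that the reparametrization $p_i=q_i^\nu/\sum_{j=1}^n q_j^\nu$ satisfies $\sum_{i=1}^n p_i=1$, so \eqref{lik} is a genuine multinomial-type likelihood in the $p_i$; this is exactly what makes the exponent of $(q_1^\nu+\cdots+q_n^\nu)$ in the likelihood combine so cleanly with that of the prior.
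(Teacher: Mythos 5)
Your proposal is correct and follows essentially the same route as the paper, whose proof simply states that the result is a straightforward application of Bayes' theorem with the reparametrization $p_i = q_i^\nu/\sum_{j=1}^n q_j^\nu$; your write-up fills in exactly the kernel bookkeeping (exponent combination giving $\beta_i + x_i$ and the normalization supplied by the integration Theorem) that the paper leaves implicit.
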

		    \begin{proof}
		        The proof is a straightforward application of Bayes theorem. The reparametrization in \eqref{lik} is such that $p_i = q_i^\nu / (\sum_{i=1}^n q_i^\nu)$, $i=1,\dots, n$, are the event probabilities (i.e.\ $\sum_{i=1}^n p_i=1$).
		    \end{proof}

		\subsection{Representation in terms of Dirichlet random variables}
		    
		    In order to derive a meaningful representation in terms of Dirichlet random variables for the random vector $\bm{Q}$, we first recall the definitions of two related classes of random vectors (see \cite{gupta}).
		    
		    \begin{definition}[Liouville distribution of the first kind]
		        Let $\bm{X}=(X_1,\dots,X_n)$ be an absolutely continuous random vector supported on the $n$-dimensional positive orthant, i.e.\
		        $R^n =\{ (x_1,\dots,x_n) \colon x_i>0\text{ for each } i=1,\dots,n \}$. It is said to have \emph{Liouville distribution of the first kind} if its joint probability density function
		        writes
		        \begin{align}\label{liou1}
		            f_{\bm{X}} (x_1,\dots,x_n) \propto f\left( \sum_{i=1}^n x_i \right) \prod_{i=1}^n x_i^{a_i-1},
		        \end{align}
		        where $a_i>0$, $i=1,\dots,n$, and $f$ is a positive continuous function
		        satisfying $\int_{\mathbb{R}_+}y^{a-1} f(y) \mathrm{d}y < \infty$, with $a= a_1,\dots, a_n$.
		        Further, we write $\bm{X}\sim L_n^{(1)}[f(\cdot);a_1,\dots,a_n]$.
		    \end{definition}
		    
		    \begin{definition}[Liouville distribution of the second kind]
                Let $\bm{Z}=(Z_1,\dots,Z_n)$ be an absolutely continuous random vector supported on $S_n = \{ (z_1,\dots,z_n) \colon z_i>0 \text{ for each } i=1,\dots,n, \: \sum_{i=1}^n z_i < 1 \}$.
	            It is said to have \emph{Liouville distribution of the second kind} if its joint probability density function
		        writes
		        \begin{align}\label{liou2}
		            f_{\bm{Z}} (z_1,\dots,z_n) \propto g\left( \sum_{i=1}^n z_i \right) \prod_{i=1}^n z_i^{c_i-1},
		        \end{align}
		        where $c_i>0$, $i=1,\dots,n$, and $g$ is a positive continuous function
		        satisfying $\int_{\mathbb{R}_+}y^{c-1} g(y) \mathrm{d}y < \infty$, with $c= c_1,\dots, c_n$.
		        Further, we write $\bm{Z}\sim L_n^{(2)}[g(\cdot);c_1,\dots,c_n]$.
	        \end{definition}
	        
	        \begin{remark}
	            If we let $f(t) = (1+t)^{-(a+a_{n+1})}$, $t>0$, $a_{n+1}>0$,
	            in \eqref{liou1}, then $\bm{X}$ is distributed as an inverted Dirichlet.
	            
	            If, in \eqref{liou2}, we choose $g(t)= (1-t)^{c_{n+1}-1}$, $0<t<1$, $a_{n+1}>0$, we have that $\bm{Z}$ is distributed as a Dirichlet. 
	        \end{remark}
	        
	        Proposition 3.1 of \cite{gupta} tells us what is the relationship between Liouville distributions of the first and of the second kind (and hence between the Dirichlet and the inverted Dirichlet). Specifically, if $\bm{Z}\sim L_n^{(2)}[g(\cdot);c_1,\dots,c_n]$ and we consider the transformation
	        \begin{align}
	            X_i =\frac{Z_i}{1- \sum_{i=1}^n Z_i}, \qquad i=1,\dots,n,
	        \end{align}
	        then $\bm{X}\sim L_n^{(1)}[f(\cdot); c_1,\dots,c_n]$, where
	        \begin{align}\label{aa}
	            f(t) = (1+t)^{-(c+1)}g \left( \frac{t}{1+t} \right) \qquad t>0.
	        \end{align}
	        Plainly, the converse relation is true as well: inverting \eqref{aa} (letting $h=t/(1+t)$) we have
	        \begin{align}
	            g(h) = \left( \frac{1}{1-h} \right) f \left( \frac{h}{1-h} \right), \qquad  0<h<1.
	        \end{align}
	        As a simple example, considering $f(t)= (1+t)^{-(c+c_{n+1})}$, $t>0$ (inverted Dirichlet), we readily obtain
	        $g(1-h)^{c_{n+1}-1}$ (Dirichlet).
	        
	        Now, by exploiting the above definition we prove the following distributional representation for $\bm{Q}$.
	        
	        \begin{proposition}
	            Let $\bm{Q} = (Q_1, \dots, Q_{n-1})$ be distributed
	            with pdf \eqref{closedform}. Then the random vector
	            $\bm{M} = (M_1,\dots,M_{n-1})$ such that
	            \begin{align}\label{a}
	                M_i = \frac{\left( \frac{Q_i}{1- \sum_{i=1}^{n-1}Q_i} \right)^\nu}{1+ \left( \frac{Q_i}{1- \sum_{i=1}^{n-1}Q_i} \right)^\nu}, \qquad i=1, \dots, n-1,
	            \end{align}
	            is distributed as a Dirichlet$(\bm{\beta}=(\beta_1,\dots,\beta_n))$.
	            
	            Conversely, if $\bm{M}\sim\text{Dirichlet}(\bm{\beta})$ we have that $\bm{Q} = (Q_1, \dots, Q_{n-1})$ such that
	            \begin{align}\label{b}
	                Q_i = \frac{\left( \frac{M_i}{1- \sum_{i=1}^{n-1}M_i} \right)^{\frac{1}{\nu}}}{1+ \left( \frac{M_i}{1- \sum_{i=1}^{n-1}M_i} \right)^{\frac{1}{\nu}}}, \qquad i=1, \dots, n-1,
	            \end{align}
	            is distributed with pdf \eqref{closedform}.
	        \end{proposition}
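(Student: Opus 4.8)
The plan is to reduce the claim to the first-to-second-kind Liouville correspondence recalled above (Proposition 3.1 of \cite{gupta}), by factoring the transformation \eqref{a} through an intermediate inverted Dirichlet vector. I would prove the direct implication in detail and obtain the converse by reversing the same chain of maps. Throughout I write $q_n := 1-\sum_{j=1}^{n-1} q_j$.

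First I would pass to the ratio coordinates $R_i = Q_i/(1-\sum_{j=1}^{n-1}Q_j)$, $i=1,\dots,n-1$, whose inverse is $Q_i = R_i/(1+\sum_j R_j)$ with Jacobian $(1+\sum_j r_j)^{-n}$. Substituting into \eqref{closedform} and using $1-\sum_{j=1}^{n-1}q_j = (1+\sum_j r_j)^{-1}$ together with $\sum_{i=1}^n q_i^\nu = (1+\sum_j r_j)^{-\nu}\,(1+\sum_{i=1}^{n-1} r_i^\nu)$, all powers of $(1+\sum_j r_j)$ cancel, leaving
\begin{align}
f_{\bm R}(r) \propto \Bigl( \prod_{i=1}^{n-1} r_i^{\nu\beta_i-1} \Bigr)\Bigl( 1+\sum_{i=1}^{n-1} r_i^\nu \Bigr)^{-\bar\beta}
\end{align}
on the positive orthant. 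Applying next the coordinatewise power map $V_i = R_i^\nu$, and using $r_i^{\nu\beta_i-1}\,|\mathrm dr_i/\mathrm dv_i| \propto v_i^{\beta_i-1}$ together with $\sum_i r_i^\nu = \sum_i v_i$, the density collapses to
\begin{align}
f_{\bm V}(v) = \frac{\Gamma(\bar\beta)}{\Gamma(\beta_1)\cdots\Gamma(\beta_n)}\Bigl( \prod_{i=1}^{n-1} v_i^{\beta_i-1} \Bigr)\Bigl( 1+\sum_{i=1}^{n-1} v_i \Bigr)^{-\bar\beta},
\end{align}
so that $\bm V \sim L^{(1)}_{n-1}[(1+\cdot)^{-\bar\beta};\beta_1,\dots,\beta_{n-1}]$, an inverted Dirichlet (note $\bar\beta-\sum_{i=1}^{n-1}\beta_i = \beta_n$).

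It then remains to observe that $\bm M$ is the image of $\bm V$ under the first-to-second-kind correspondence map $Z_i = V_i/(1+\sum_j V_j)$, which in the original variables is the event-probability reparametrisation $M_i = Q_i^\nu/\sum_{j=1}^n Q_j^\nu$ of this section. By Proposition 3.1 of \cite{gupta} this map carries $L^{(1)}_{n-1}[(1+\cdot)^{-\bar\beta};\beta_1,\dots,\beta_{n-1}]$ onto $L^{(2)}_{n-1}[(1-\cdot)^{\beta_n-1};\beta_1,\dots,\beta_{n-1}]$, which is exactly $\text{Dirichlet}(\bm\beta)$, proving the first assertion. For the converse I would run the chain backwards: starting from $\bm M\sim\text{Dirichlet}(\bm\beta)$, the map $X_i = M_i/(1-\sum_j M_j)$ produces the inverted Dirichlet $\bm V$, the inverse power map $R_i = V_i^{1/\nu}$ supplies the exponent $1/\nu$, and $Q_i = R_i/(1+\sum_j R_j)$ returns a vector with pdf \eqref{closedform}; tracing the composition reproduces \eqref{b}.

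The main obstacle is the first reduction: checking that the ratio map followed by the power map lands \emph{exactly} on the inverted Dirichlet. This hinges on the complete cancellation of the $(1+\sum_j r_j)$ factors — which forces the identity $\bar\beta=\sum_i\beta_i$ and the exponents $\nu\beta_i-1$ to balance against the Jacobian exponent $-n$ — and on the collapse of the constant $\nu^{n-1}\Gamma(\bar\beta)/\prod_i\Gamma(\beta_i)$ together with the power-map Jacobian factor $\nu^{-(n-1)}$ onto the inverted Dirichlet normaliser $\Gamma(\bar\beta)/\prod_i\Gamma(\beta_i)$. Once $\bm V$ is identified as an inverted Dirichlet, the conclusion is a direct appeal to the cited correspondence.
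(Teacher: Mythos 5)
Your proposal is correct and is essentially the paper's own proof: both arguments factor the transformation through the intermediate vector $\bm{V}$ (the paper's $\bm{Y}$, i.e.\ the ratio map followed by the $\nu$-th power), identify its law as the inverted Dirichlet $L^{(1)}_{n-1}\bigl[(1+\cdot)^{-\bar{\beta}};\beta_1,\dots,\beta_{n-1}\bigr]$, and then invoke Proposition 3.1 of \cite{gupta} to land on Dirichlet$(\bm{\beta})$, the converse following by inverting the chain; your only departure is that you compute the Jacobians explicitly where the paper recycles the changes of variables from its normalization theorem and Remark \ref{remrem}. One caveat, which affects the paper's proof in exactly the same way: what is actually established is that $M_i = V_i/\bigl(1+\sum_{j=1}^{n-1}V_j\bigr)$, i.e.\ $Q_i^\nu$ divided by the sum of \emph{all} $n$ powers $Q_j^\nu$, is Dirichlet distributed, and this coincides with \eqref{a} (dually \eqref{b}) as literally written only when $n=2$; for $n>2$ the single term $1+V_i$ in the denominator of \eqref{a} appears to be a typo for $1+\sum_{j=1}^{n-1}V_j$, a defect of the statement rather than a gap in your argument.
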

	        \begin{proof}
	            Let us define the random vector $\bm{Y}=(Y_1,\dots,Y_{n-1})$ such that
	            \begin{align}
	                Y_i = \left( \frac{Q_i}{1- \sum_{i=1}^{n-1}Q_i}, \right)^\nu \qquad i=1, \dots, n-1,
	            \end{align}
	            and let $\beta^* = \beta_1 + \dots+ \beta_{n-1}$.
	            Combining the transformations in the proof of Theorem \ref{(eqintegrale)} and of Remark \ref{remrem} we see that $\bm{Y}$ has pdf
	            \begin{align}
	                f_{\bm{Y}}(y_1,\dots,y) = \frac{\Gamma(\beta^*+\beta_n)}{\Gamma(\beta_1)\cdot \ldots \cdot \Gamma(\beta_n)}\left( 1+\sum_{i=1}^{n-1} y_i \right)^{-\beta^* -\beta_n}
	                \prod_{i=1}^{n-1} y_i^{\beta_i -1},
	            \end{align}
	            and hence $\bm{Y} \sim L_{n-1}^{(1)}[(1+\cdot)^{-(\beta^*+\beta_n)};\beta_1, \dots, \beta_{n-1}]$ (i.e.\ an inverted Dirichlet distribution).
	            
	            By using the mentioned transformations between Liouville distributions of first and second kinds we have that
	            $\bm{M} \sim L_{n-1}^{(2)}[(1-\cdot)^{\beta_n-1};\beta_1, \dots, \beta_{n-1}]$ (Dirichlet), where
	            \begin{align}
	                M_i = \frac{Y_i}{1+ \sum_{i=1}^{n-1}Y_i}, \qquad i=1, \dots, n-1,
	            \end{align}
	            and \eqref{a} follows.
	            
	            Finally, a rewriting of the components of $\bm{Q}$ in terms of those of $\bm{Y}$, leads easily to \eqref{b}.
	        \end{proof}

        \section{Monte Carlo Simulations}

            \noindent The simulation of the random variables $\bm{Q}$ for the fractional Dirichlet distribution is straightforward based on the construction presented in section 2. First, one needs to generate random variables with density \eqref{miche} and one can use the mixture representation discussed in \cite{cahoypolito}
            \begin{align*}
                X_i \stackrel{d}{=} U_i^{1/\nu} V_\nu,
            \end{align*}
            where $U_i$ is $\mathrm{Gamma}(\beta_i,\lambda)$-distributed and $V_\nu$ is strictly positive-stable distributed with $\exp(-s^\nu)$ as the Laplace transform of the probability density function. Summing the $X_i$ to get $W$ and dividing $X_i$ by $W$ gives $Q_i$.
            \begin{remark}
                For $\beta=1$, there is an alternative representation \cite{kozubowski,fulger}:
                \begin{align*}
                    X \stackrel{d}{=} \Xi Z^{1/\nu},
                \end{align*}
                where $\Xi$ is $\mathrm{Exp}(\lambda)$-distributed and $Z$ is Cauchy-distributed.
            \end{remark}
            The behaviour of the fractional Dirichlet distribution in the case $N=2$ is shown in Fig.~\ref{fig1} for $\nu = 0.7$, $\beta_1 = 2$, $\beta_2 = 3$. In this case, the generalized Dirichlet distribution is not a good approximation.
            \begin{figure}
                \includegraphics[width=\textwidth]{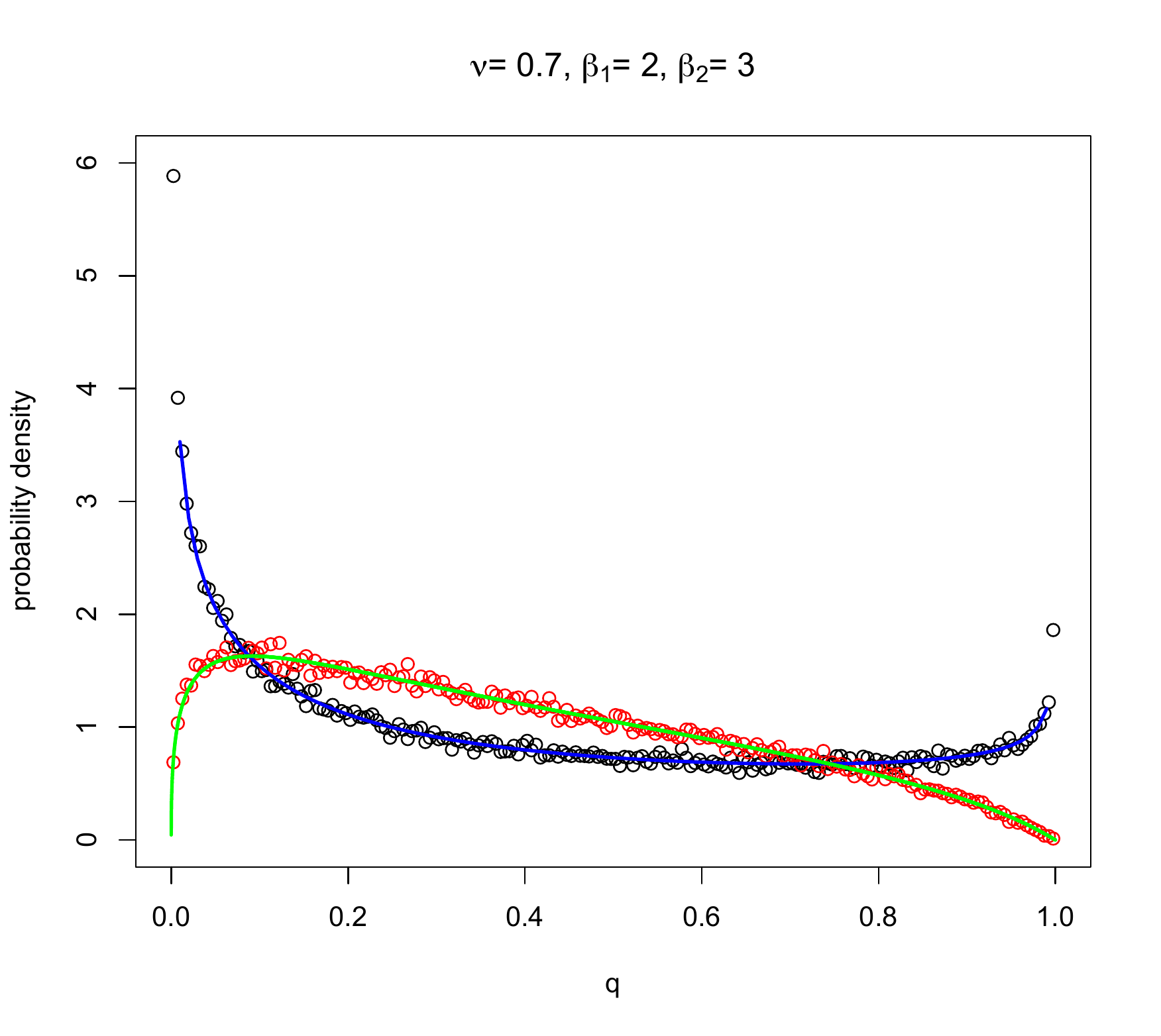}
                \caption{\label{fig1}Probability density function for $\nu=0.7$, $\beta_1=2$, $\beta_2=3$. The black circles represent the results of a Monte Carlo simulation for the fractional Dirichlet distribution and the blue line is the corresponding theoretical value from equation \eqref{integralformula}. The red circles come from a Monte Carlo simulation of the generalized Dirichlet distribution and the green curve is the plot of the theoretical probability density function.}
            \end{figure}
            This is not the case for $N=2$, $\nu=0.7$, $\beta_1=0.2$ and $\beta_2=0.4$ where the generalized Dirichlet distribution is a reasonably good approximation of the fractional Dirichlet distribution. This is represented in Fig.~\ref{fig2}.
            \begin{figure}
                \includegraphics[width=\textwidth]{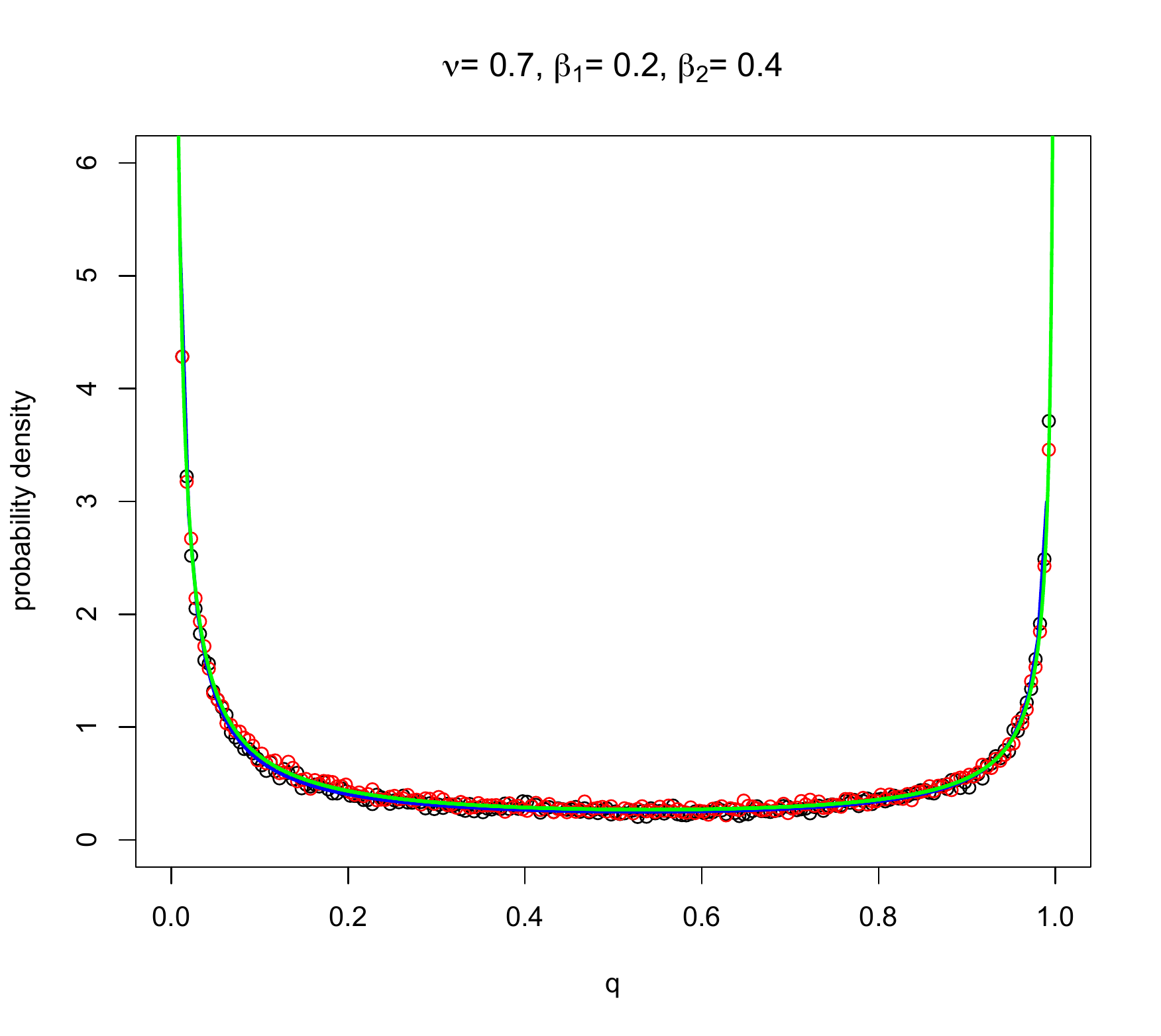}
                \caption{\label{fig2}Probability density function for $\nu=0.7$, $\beta_1=0.2$, $\beta_2=0.4$. Circles and curves have the same meaning as in Fig.~\ref{fig1}.}
            \end{figure}
            For larger values of the parameters $\beta_i$, one gets a unimodal distribution in both cases as shown in Fig.~\ref{Fig3} for $N=2$, $\nu=0.95$, $\beta_1=10$, $\beta_2=30$.
            \begin{figure}
                \includegraphics[width=\textwidth]{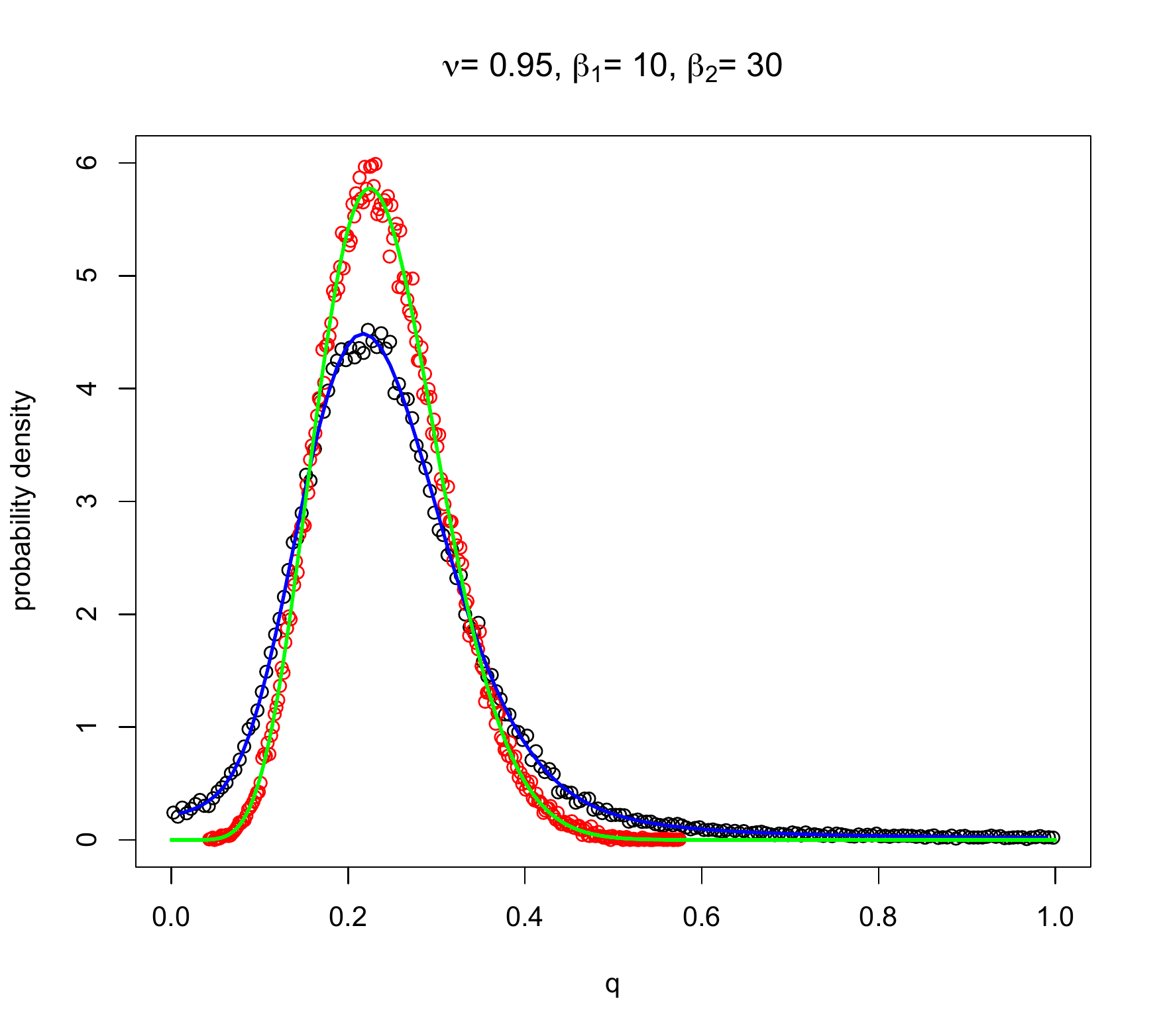}
                \caption{\label{Fig3}
                Probability density function for $\nu=0.95$, $\beta_1=10$, $\beta_2=30$. Circles and curves have the same meaning as in Fig.~\ref{fig1}.}
            \end{figure}
            In Fig.~\ref{Fig4}, the heavy character of the right tail of the generalized Dirichlet distribution is highlighted by the log-log plot.
            \begin{figure}
                \includegraphics[width=\textwidth]{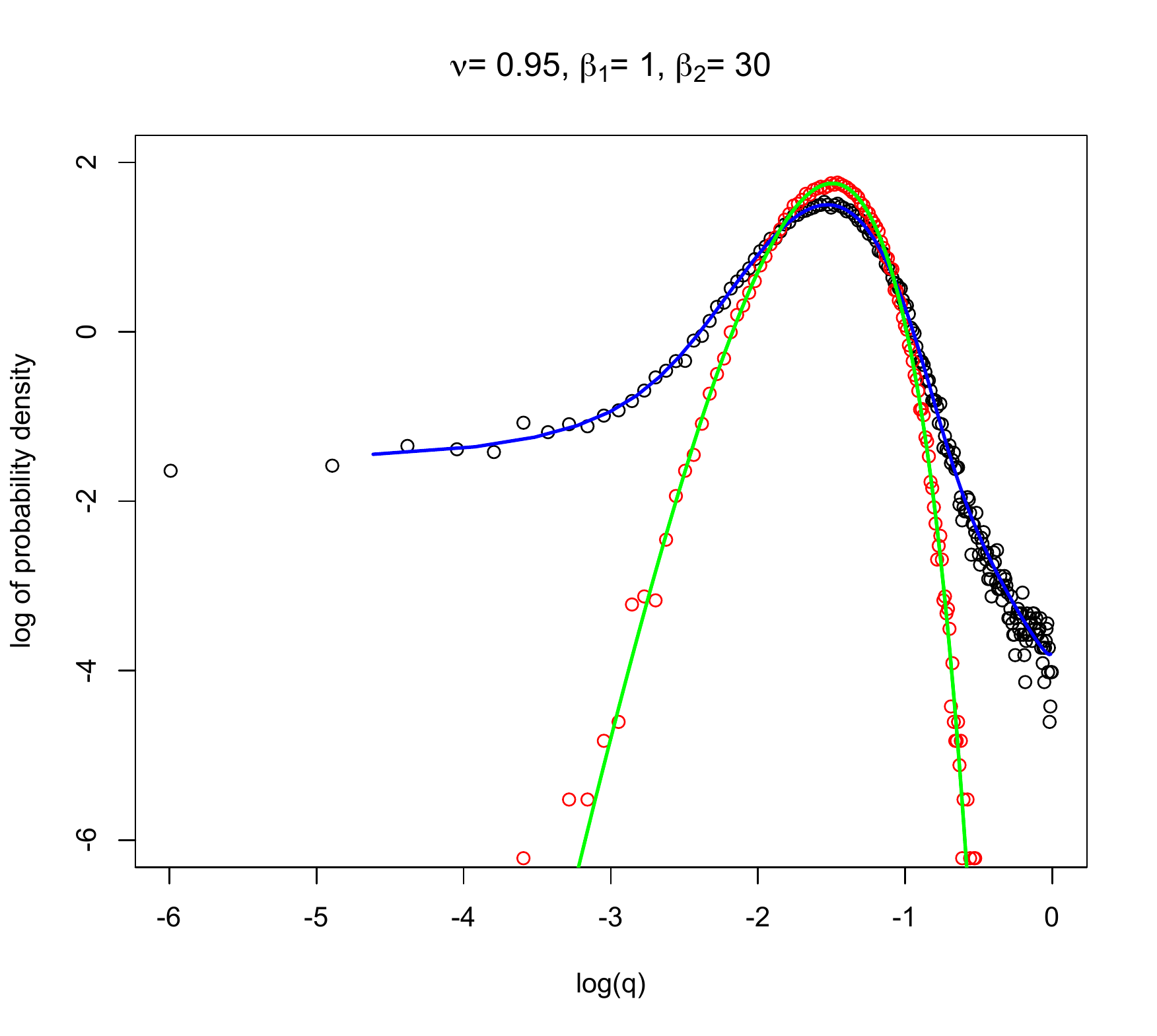}
                \caption{\label{Fig4}
                Double logarithmic plot of the probability density function for $\nu=0.95$, $\beta_1=10$, $\beta_2=30$. Circles and curves have the same meaning as in Fig.~\ref{fig1}.}
            \end{figure}

\section*{Acknowledgements}

\noindent F.~Polito has been partially supported by the project ``Memory in Evolving Graphs'' (Compagnia di San Paolo/Università degli Studi di Torino).





 \bigskip \smallskip

 \it

 \noindent
$^1$ Department of Mathematics \emph{G. Peano} \\
Università degli Studi di Torino \\
Via C. Alberto 10, 10123, Torino, Italy \\[4pt]
e-mails: elvira.dinardo@unito.it, federico.polito@unito.it\\[12pt]
$^2$ Department of Mathematics\\
University of Sussex\\
Falmer, Brighton, BN1 9QH, UK\\[4pt]
  e-mail: e.scalas@sussex.ac.uk (Corr. author)

\end{document}